\def\TT{{\mathbb{T}}}
\def\ZZ{{\mathbb{Z}}}
\def\CC{{\mathbb{C}}}
\def\RR{{\mathbb{R}}} 
\def\TT{{\mathbb{T}}}
\newtheorem{thm}{Theorem}[section]
\newtheorem{cor}[thm]{Corollary}
\newtheorem{lemma}[thm]{Lemma}
\newtheorem{prop}[thm]{Proposition}
\begin{document}

\title{Calabi flow on toric varieties with bounded Sobolev constant, I}
\author{Hongnian Huang}
\date{}

\maketitle

\begin{abstract}
Let $(X, P)$ be a toric variety. In this note, we show that the $C^0$-norm of the Calabi flow $\varphi(t)$ on $X$ is uniformly bounded in $[0, T)$ if the Sobolev constant of $\varphi(t)$ is uniformly bounded in $[0, T)$. We also show that if $(X, P)$ is uniform $K$-stable, then the modified Calabi flow converges exponentially fast to an extremal K\"ahler metric if the Ricci curvature and the Sobolev constant are uniformly bounded. At last, we discuss an extension of our results to a quasi-proper K\"ahler manifold.
\end{abstract}

\section{Introduction}
In \cite{ChenHe2, ChenHe3}, Chen and He study the Calabi flow on toric surfaces with bounded Sobolev constant. The study of Calabi flow with bounded Sobolev constant has been also elaborated by Li and Zheng \cite{LZ}. Related work can be found in \cite{St1, St2, ChenWeber, CLW, TV1, TV2, Xu}. Interested readers are encouraged to read these papers and the references therein.

In this note, we study the Calabi flow on toric varieties with bounded Sobolev constant and on general K\"ahler manifolds which are quasi-proper. Our first result is:

\begin{thm}

\label{thm1}
  Let $X$ be a toric variety with Delzant polytope $P$. Let $\varphi(t), ~  0 \le t < T < \infty$ be a one parameter family toric invariant relative K\"ahler potentials satisfying the Calabi flow equation. Suppose that the Sobolev constant of $\varphi(t)$ is uniformly bounded. Then
  \begin{align*}
    |\varphi(t)|_{L^\infty} < C, ~ \forall ~ t \in [0, T),
  \end{align*}
  where $C$ is some constant independent of $t$.
\end{thm}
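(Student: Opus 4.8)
The plan is to reduce the $L^\infty$ bound to a uniform control of the oscillation $\mathrm{osc}_X\,\varphi(t)=\sup_X\varphi(t)-\inf_X\varphi(t)$ together with control of the additive normalization, and to obtain the oscillation bound from two Green's-function estimates: one for the fixed background metric and one for the evolving metric. Throughout, the cohomology class is fixed, so the volume $V=\int_X\omega_\varphi^n$ is constant along the flow and every integral average below is time-independent in its normalization. Write $\bar\varphi_0=\frac1V\int_X\varphi\,\omega_0^n$ and $\bar\varphi_\varphi=\frac1V\int_X\varphi\,\omega_\varphi^n$.

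For the upper half I would work with $\omega_0$. Since $\omega_\varphi>0$ we have $\Delta_{\omega_0}\varphi=\mathrm{tr}_{\omega_0}\omega_\varphi-n\ge -n$ with $\int_X\Delta_{\omega_0}\varphi\,\omega_0^n=0$, so representing $\varphi-\bar\varphi_0$ by the fixed Green's function $G_0\ge -A_0$ and using the sign of $\Delta_{\omega_0}\varphi$ gives $\sup_X\varphi-\bar\varphi_0\le C_1$ with $C_1=C_1(\omega_0)$ independent of $t$. For the lower half I would run the same computation with $\omega_\varphi$, where now $\Delta_{\omega_\varphi}\varphi=n-\mathrm{tr}_{\omega_\varphi}\omega_0\le n$ is bounded above; here the hypothesis enters through a uniform lower bound $G_{\omega_\varphi}\ge -A$ for the Green's function of $(X,\omega_\varphi)$, which follows from the bounded Sobolev constant and the fixed volume by the standard heat-kernel estimate. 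This yields $\bar\varphi_\varphi-\inf_X\varphi\le C_2$ uniformly. Adding the two estimates gives $\mathrm{osc}_X\,\varphi\le C_1+C_2+\big(\bar\varphi_0-\bar\varphi_\varphi\big)$.

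The remaining term $\bar\varphi_0-\bar\varphi_\varphi=\frac1V\int_X\varphi\,(\omega_0^n-\omega_\varphi^n)$ is the Aubin functional $I(\varphi)\ge 0$, and bounding it uniformly on $[0,T)$ is the main obstacle: this is exactly a non-concentration statement for the volume density $\omega_\varphi^n/\omega_0^n$, and it cannot come from the Green's-function estimates alone. To control it I would use that the Calabi flow decreases both the Calabi energy $\int_X(S-\underline S)^2\,\omega_\varphi^n$ and the Mabuchi energy, so these stay bounded by their initial values; differentiating $I$ along the flow and estimating $\frac{d}{dt}I$ by the Calabi energy and the volume ratio, one bounds the growth of $I$ once the density is controlled, and integrates over the finite interval $[0,T)$. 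In the toric picture this step is cleanest: via Donaldson's formula the bounded Mabuchi energy splits into an entropy term and a linear term in the symplectic potential $u$, and the bounded Sobolev constant controls the entropy, so that convexity of $u$ converts the resulting bound into a uniform bound for $u$, hence for $\varphi$.

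I expect the decisive difficulty to be precisely this last step, the uniform bound on $I(\varphi)$ over $[0,T)$: the Green's-function reduction is routine potential theory and the trace-Laplacian signs are free, but closing the loop requires combining the bounded Sobolev constant with the energy monotonicity of the flow to rule out concentration of $\omega_\varphi^n$, and this is where both the finiteness of $T$ and the toric structure are used. Once $I(\varphi)$ is bounded the oscillation bound follows, and the additive constant is pinned down by the normalization implicit in the notion of a relative Kähler potential, with the flow equation $\partial_t\varphi=S(\varphi)-\underline S$ providing an a priori bound on the drift of the average over the finite interval as a fallback.
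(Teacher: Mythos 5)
Your reduction of the theorem to a uniform bound on $I(\varphi)$ is sound as far as it goes: the two Green's-function estimates are standard, and a bounded Sobolev constant together with fixed volume does give a uniform lower bound for the Green's function of $\omega_\varphi$. But the step you yourself flag as decisive --- the uniform bound on $I(\varphi)$ over $[0,T)$ --- is a genuine gap, and neither mechanism you propose closes it. The ODE approach is circular, as you concede: $\frac{d}{dt}I$ along the Calabi flow contains terms such as $\int_X (R_\varphi - \underline{R})\,\omega^n$, and estimating them by the bounded Calabi energy $\int_X (R_\varphi-\underline{R})^2\,\omega_\varphi^n$ via Cauchy--Schwarz reintroduces exactly the volume ratio $\omega_\varphi^n/\omega^n$ you are trying to control. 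The toric mechanism is unjustified: Donaldson's formula writes the Mabuchi energy as $-\int_P \log\det(D^2u)\,d\mu + L(u)$, but monotonicity along the flow only bounds this \emph{sum} from above; there is no known implication ``bounded Sobolev constant $\Rightarrow$ bounded entropy term,'' and extracting a bound on $u$ from the linear part $L(u)$ is precisely what requires uniform $K$-stability --- which is the hypothesis of Theorem 1.2 of this paper, not of Theorem 1.1. There is a secondary gap of the same nature: your ``fallback'' for the additive normalization, bounding the drift of the average via the flow equation, fails because $\frac{d}{dt}\bar\varphi_0 = \frac{1}{V}\int_X (R_\varphi - \underline{R})\,\omega_0^n$ is again not controlled without the density.

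The paper closes both gaps with two tools absent from your proposal. First, \emph{absolute} (not average-relative) bounds on $\max_X \varphi$ come from the symplectic side: by Calabi--Chen the Calabi flow decreases geodesic distance, and for toric metrics the Mabuchi distance is realized by the $L^2(P)$ distance between symplectic potentials; a doubling and triangle-inequality argument then bounds $\| u(t)\|_{L^2(P)}$ for $t < T < \infty$ (this is exactly where finiteness of $T$ enters), and convexity of $u$ converts the $L^2$ bound into pointwise control, hence into two-sided bounds on $\max_X\varphi$ via the Legendre transform. Second, the $I$-bound and the normalization are handled in one stroke by the observation that the functional $D(\varphi) = \int_X \varphi\,\omega^n - J(\varphi)$ satisfies $\delta D(f) = \int_X f\,\omega_\varphi^n$, hence is exactly conserved along the Calabi flow: combined with the $L^1_\omega$ bound (which does follow from the $\max\varphi$ bound by the same Green's-function argument you use), conservation of $D$ bounds $J(\varphi)$, hence $I(\varphi)$, hence $\int_X \varphi\,\omega_\varphi^n$. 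After that the paper runs a Moser iteration with the bounded Sobolev constant rather than your second Green's-function estimate; that difference is cosmetic, but without the distance-decreasing input and the conservation of $D$, your argument does not close.
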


Suppose $(X, P)$ is uniform $K$-stable, we would like to understand the global convergence of the (modified) Calabi flow. The following result can be compared to the global convergence results in \cite{H1, H2, Sz1}.

\begin{thm}

\label{thm2}
  If $(X, P)$ is uniform $K$-stable, then the modified Calabi flow introduced in \cite{HZ} will converge to an extremal K\"ahler metric exponentially fast if 

  \begin{itemize}
    \item The Calabi flow starts from a toric invariant K\"ahler metric.

    \item The Sobolev constant is uniformly bounded along the flow.

    \item The Ricci curvature is uniformly bounded along the flow.
  \end{itemize}
\end{thm}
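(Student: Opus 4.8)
The plan is to run the gradient-flow convergence machinery, with Theorem~\ref{thm1} and the two curvature hypotheses supplying the analytic input. I would first pass to the polytope picture, writing the flow through the symplectic potential $u(t)$ on $P$, so that the modified Calabi flow of \cite{HZ} becomes the gradient flow of the modified Mabuchi $K$-energy $\mathcal{M}$. Two facts are then immediate. First, $\mathcal{M}$ is nonincreasing, with $\tfrac{d}{dt}\mathcal{M}(u(t)) = -\mathrm{Ca}(u(t))$, where $\mathrm{Ca}$ is the modified Calabi functional (the $L^2$-norm of $S(u)$ minus its extremal affine part); hence $\int_0^\infty \mathrm{Ca}(u(t))\,dt \le \mathcal{M}(u(0)) - \inf \mathcal{M}$. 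Second, uniform $K$-stability of $(X,P)$ yields, by toric Yau--Tian--Donaldson theory, both an extremal symplectic potential $u_\infty$ and a coercivity (properness) estimate for $\mathcal{M}$; in particular $\inf\mathcal{M}$ is finite and attained on the automorphism orbit of extremal potentials, so the time integral above is finite.

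The first substantive step is long-time existence together with uniform higher-order bounds. Theorem~\ref{thm1} gives $|\varphi(t)|_{L^\infty} < C$; combining this $C^0$ bound with the uniform Sobolev constant and the uniform Ricci bound, I would run Moser iteration and then Schauder bootstrapping --- on the open torus orbit, matched to the Guillemin boundary behaviour of toric potentials near $\partial P$ --- to upgrade $C^0$ control to uniform $C^k$ control for every $k$, uniformly in $t$. A continuation argument then forces $T = \infty$: were $T$ finite, these uniform estimates would extend the flow past $T$. The same bounds give $C^\infty$-precompactness of $\{u(t)\}$, and since $\int_0^\infty \mathrm{Ca}\,dt < \infty$ forces $\mathrm{Ca}(u(t_i)) \to 0$ along some $t_i \to \infty$, any smooth sublimit is a critical point, that is, an extremal potential.

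I would then promote subconvergence to genuine convergence with an exponential rate by a stability-of-the-fixed-point argument, in the spirit of the global convergence results of \cite{H1, H2, Sz1}. Uniqueness of extremal metrics modulo automorphisms identifies the sublimit with $u_\infty$ up to the automorphism action, and coercivity confines the whole trajectory to a bounded set. Near $u_\infty$ the flow linearizes to the heat-type equation $\dot\psi = -\mathcal{D}^\ast \mathcal{D}\,\psi$, where $\mathcal{D}^\ast\mathcal{D}$ is the (modified) Lichnerowicz operator: nonnegative, self-adjoint, with kernel the torus-invariant holomorphy potentials --- precisely the neutral directions already quotiented out by using the modified flow. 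Uniform $K$-stability makes the transverse spectral gap $\lambda_1 > 0$ quantitative, so once the flow enters a $C^\infty$-neighbourhood of the extremal orbit (guaranteed by the previous step), interpolation between the uniform $C^k$ bounds and an ODE comparison for $\mathrm{Ca}(u(t))$ yield $\mathrm{Ca}(u(t)) \le C e^{-\lambda_1 t}$, and hence exponential convergence of $u(t)$ --- equivalently of the K\"ahler metrics --- to an extremal metric.

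The main obstacle is the second step: extracting uniform all-order estimates from merely a $C^0$ bound plus the Sobolev and Ricci bounds. Concretely, one must prevent the Hessian of $u$ from degenerating, so that the Abreu equation stays uniformly elliptic, and must control derivatives uniformly up to the boundary $\partial P$ where the symplectic potential is singular; the bounded Sobolev constant is what keeps Moser iteration effective and the bounded Ricci curvature is what feeds the bootstrap, but marrying the two uniformly in $t$ is delicate. A secondary difficulty is ensuring the spectral gap $\lambda_1$ is uniform and that the kernel is correctly identified, since that is exactly what upgrades the convergence rate from polynomial to exponential.
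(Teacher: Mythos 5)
There is a genuine gap, and it sits at the center of your argument: you use Theorem~\ref{thm1} as the source of the $C^0$ bound that feeds your precompactness and convergence steps, but Theorem~\ref{thm1} is an intrinsically finite-time statement. Its constant is $C(P,T,u_0)$ --- it is produced by iterating the distance-decreasing property of the Calabi flow over $[0,T)$ and grows with $T$ --- so it is independent of $t$ only for $t$ ranging in a \emph{fixed} finite interval. Your continuation argument for long-time existence is legitimate (for each finite $T$ the bound is finite, so the flow extends), but the moment you claim ``the same bounds give $C^\infty$-precompactness of $\{u(t)\}$'' as $t \to \infty$, you are using a bound that degenerates as $T \to \infty$. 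This is precisely where the paper makes uniform $K$-stability do quantitative work, and where your proposal does not: since the modified Mabuchi energy is non-increasing along the modified flow, uniform $K$-stability yields, via Proposition 5.1.8 and Lemma 5.1.3 of \cite{D1}, a bound $\int_P u \, d\mu < C$ with $C$ independent of $t$, for the symplectic potential normalized at a point $x_0 \in P$. The paper then re-runs the toric $C^0$ machinery with this uniform input --- Proposition~\ref{upper_bound_max} and Corollary~\ref{cor_max} for the two-sided bound on $\max_X \varphi$, the argument of Lemma~\ref{two_integral_bound} for the integral bounds, Lemma 2.1 of \cite{ZZ} for $J(\varphi)$, and then Propositions~\ref{prop_L2} and~\ref{prop_C0} --- to obtain a $t$-independent $L^\infty$ estimate. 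You do mention coercivity (``confines the whole trajectory to a bounded set''), but you deploy it only to localize the trajectory near the extremal orbit, in a topology you never specify, while the actual $C^0$ anchor in your write-up remains Theorem~\ref{thm1}; as written, the chain of uniform-in-time estimates never gets started.

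Two secondary problems. First, you invoke toric Yau--Tian--Donaldson theory to produce the extremal potential $u_\infty$ and the attained infimum of $\mathcal{M}$ in advance; no such existence result is assumed or needed in the paper's scheme (and was not available for general toric varieties when this was written) --- the extremal metric there \emph{arises} as the limit of the flow, with the exponential rate supplied by the stability theorem of \cite{HZ}, i.e.\ by the argument of Theorem 1.6 of \cite{H2}, once the flow enters a neighborhood of a critical point. Second, the spectral gap of the (modified) Lichnerowicz operator at the limit metric is automatic for a fixed elliptic operator on a compact manifold with the holomorphy potentials quotiented out; it is not something uniform $K$-stability ``makes quantitative.'' Finally, be aware that the passage from a $C^0$ bound plus bounded Ricci and Sobolev constants to uniform all-order estimates, which you sketch as Moser iteration plus Schauder bootstrapping, is exactly the substantial content the paper outsources to \cite{H2}; citing it is fine, but your own description correctly identifies (without resolving) the hard point, namely keeping the Abreu equation uniformly elliptic up to $\partial P$.
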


For a general manifold, we have:

\begin{thm}
\label{thm3}

  Let $(X, J, \omega)$ be a quasi-proper K\"ahler manifold in the sense of Chen \cite{Chen3}, i.e.,
  there exists a small constant $\delta > 0$ and a constant $C$ such that the Mabuchi energy
  $$
  Ma(\varphi) \ge \delta \int_X \log \frac{\omega_\varphi^n}{\omega^n} ~ \omega_\varphi^n - C.
  $$
  Then $|\varphi(t)|_{L^\infty}$ is uniformly bounded for $t \in [0, T)$ if the Sobolev constant is uniformly bounded in $[0, T)$.
\end{thm}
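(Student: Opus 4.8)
The plan is to convert the hypotheses into a uniform \emph{entropy} bound and then to run a Moser iteration carried out entirely in the geometry of the evolving metric, so that the bounded Sobolev constant becomes exactly the input that closes the scheme. Concretely, I would first use the monotonicity of the Mabuchi energy along the Calabi flow: differentiating gives
\[
\frac{d}{dt}\,Ma(\varphi(t)) = -\int_X \bigl(R(\varphi) - \underline{R}\bigr)^2\,\omega_\varphi^n \le 0,
\]
so $Ma(\varphi(t)) \le Ma(\varphi(0))$ for every $t \in [0,T)$. Substituting this into the quasi-properness assumption yields
\[
\delta\int_X \log\frac{\omega_\varphi^n}{\omega^n}\,\omega_\varphi^n \le Ma(\varphi(t)) + C \le Ma(\varphi(0)) + C,
\]
so the entropy $\int_X \log(\omega_\varphi^n/\omega^n)\,\omega_\varphi^n$ is bounded above uniformly in $t$ (and it is bounded below by Jensen's inequality). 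The problem thus reduces to the time-independent assertion that a uniform entropy bound, together with a uniform bound on the Sobolev constant of $\omega_\varphi$, forces $|\varphi|_{L^\infty}$ to be bounded.

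Next I would normalize $\sup_X\varphi = 0$ and treat the two sides of the oscillation separately. The upper bound for $\sup_X\varphi - \frac1V\int_X\varphi\,\omega^n$ is elementary, using the Green representation of the fixed metric $\omega$ together with $\Delta_\omega\varphi \ge -n$; the substance lies in the lower bound for $\inf_X\varphi$. Writing the complex Monge--Amp\`ere identity for $u := -\varphi \ge 0$ and integrating by parts,
\[
p\int_X u^{p-1}\, i\,\partial u\wedge\bar\partial u\wedge S = \int_X u^p\,\omega_\varphi^n - \int_X u^p\,\omega^n \le \int_X u^p\,\omega_\varphi^n, \qquad S := \sum_{k=0}^{n-1}\omega_\varphi^k\wedge\omega^{n-1-k}.
\]
Since $S \ge \omega_\varphi^{n-1}$, the left-hand side dominates the clean Dirichlet energy $\int_X |\nabla_{\omega_\varphi} u^{(p+1)/2}|^2_{\omega_\varphi}\,\omega_\varphi^n$ up to a factor $p^{-1}(p+1)^2$. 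Feeding this into the Sobolev inequality of $\omega_\varphi$ bounds the $L^{(p+1)n/(n-1)}(\omega_\varphi^n)$ norm of $u$ by its $L^{p+1}(\omega_\varphi^n)$ and $L^{p}(\omega_\varphi^n)$ norms, with constants growing only polynomially in $p$; iterating $p\mapsto \frac{n}{n-1}p$ then bounds $\sup_X u$ by a single fixed low moment $\int_X u^{q_0}\,\omega_\varphi^n$ and a constant depending only on the Sobolev bound.

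The crux is that this iteration is intrinsic to $\omega_\varphi$: it never requires $L^p$ integrability with $p>1$ of the volume ratio $e^F := \omega_\varphi^n/\omega^n$, which the entropy alone (being only an $L\log L$-type control) cannot supply and which the classical Yau--Kolodziej scheme would demand. The bounded Sobolev constant of the \emph{moving} metric is precisely what permits every integral to be taken against $\omega_\varphi^n$. The entropy then re-enters only to bound the single starting moment: combining the uniform Skoda/$\alpha$-invariant estimate $\int_X e^{\alpha u}\,\omega^n \le C_\alpha$ (valid for some fixed $\alpha > 0$ because $\varphi$ is $\omega$-plurisubharmonic with $\sup_X\varphi = 0$) with the entropy bound through the Young inequality $s\,t \le \frac1\alpha\bigl(s\log s - s + e^{\alpha t}\bigr)$ applied to $s = e^F$, $t = u$, yields $\int_X u\,\omega_\varphi^n = \int_X u\,e^F\,\omega^n \le C$, and likewise for any fixed higher moment. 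The main obstacle I anticipate is the bookkeeping that keeps the polynomial growth of the iteration constants summable across the limit $p\to\infty$, and that ensures the lower-order $\omega_\varphi^n$-moments produced at each Sobolev step are genuinely absorbed rather than reintroducing a dependence on $e^F$. Once $\sup_X u = -\inf_X\varphi$ is controlled, combining with the easy upper bound and the normalization $\sup_X\varphi=0$ gives the uniform $L^\infty$ estimate.
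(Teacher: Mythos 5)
Your scheme has a genuine gap: every quantity it controls is invariant under replacing $\varphi(t)$ by $\varphi(t)+c(t)$ for an arbitrary time-dependent constant. The Mabuchi energy, the entropy $\int_X \log\frac{\omega_\varphi^n}{\omega^n}\,\omega_\varphi^n$, the Sobolev constant of $\omega_\varphi$, and the Skoda integral (taken after you impose $\sup_X\varphi=0$) all depend only on the metric $\omega_\varphi$ or on the sup-normalized potential; likewise your ``easy upper bound'' controls only $\sup_X\varphi - \frac1V\int_X\varphi\,\omega^n$, again a shift-invariant quantity. Hence the argument can at best bound the oscillation $\sup_X\varphi-\inf_X\varphi$, never $|\varphi(t)|_{L^\infty}$ itself, whereas the theorem concerns the actual Calabi-flow potential, whose additive normalization is dictated by $\frac{\partial\varphi}{\partial t}=R_\varphi-\underline{R}$ and could a priori drift. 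The missing ingredient is a conserved quantity that is \emph{not} shift-invariant: the paper uses the functional $D(\varphi)=\int_X\varphi\,\omega^n - J(\varphi)$, whose variation is $\delta D(f)=\int_X f\,\omega_\varphi^n$, so that $D$ is constant along the flow; since $J$ is shift-invariant and $\int_X\varphi\,\omega^n=\int_X\tilde\varphi\,\omega^n+V\sup_X\varphi$ for $\tilde\varphi=\varphi-\sup_X\varphi$, constancy of $D$ together with an oscillation bound pins down $\sup_X\varphi(t)$ (this is exactly the role of the hypothesis $D_\omega(\varphi)=0$ in Lemma (\ref{lem_integral_bound_2}), and of the bounds on $\max_X\varphi$ and $\int_X|\varphi|\,\omega^n$ that the paper imports from Chen's Theorem 1.4). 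Without this anchor your proof establishes a strictly weaker statement.

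A second, repairable, error: ``and likewise for any fixed higher moment'' is false. The entropy is an $L\log L$ bound and, through your Young inequality, pairs only against exponentially integrable functions; Skoda gives $\int_X e^{\alpha u}\,\omega^n\le C_\alpha$ but \emph{not} $\int_X e^{\alpha u^{q_0}}\,\omega^n\le C$ for $q_0>1$ (an $\omega$-plurisubharmonic function with a logarithmic pole already violates it). So the only moment available for free is the first one, $\int_X u\,\omega_\varphi^n\le C$. That can still suffice, but then the iteration cannot be started naively at $p=1$, since the Sobolev inequality reintroduces $\int_X u^2\,\omega_\varphi^n$ on the right-hand side; one needs either an interpolation-absorption step or the device of Proposition (\ref{prop_L2}): apply Sobolev to $\sqrt{|\varphi|}$, whose $L^2_{\omega_\varphi}$ norm is precisely the bounded $L^1_{\omega_\varphi}$ norm, bootstrap finitely many times up to $L^2_{\omega_\varphi}$, and only then run the iteration of Proposition (\ref{prop_C0}). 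With these two repairs your route does work, and it is then an essentially self-contained reconstruction of the paper's proof: your derivation of the entropy bound from Mabuchi monotonicity plus quasi-properness, the Green's function bound, and the Skoda--Young estimate for $\int_X u\,\omega_\varphi^n$ replace the paper's citation of Chen, while the final Moser iteration with respect to the moving metric coincides with the paper's.
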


{\bf Acknowledgment:} The results in this note are motivated by discussions with Vestislav Apostolov. The author would like to thank him for sharing his ideas.  Thanks also go to Yiyan Xu and Kai Zheng for help discussions.

\section{Notations and setup}

\subsection{K\"ahler geometry}

Let $(X, J, \omega)$ be a K\"ahler manifold with complex dimension $n$, where locally 

\begin{align*}
  \omega = \sqrt{-1} g_{i\bar{j}} d z^i \wedge d \bar{z}^j,
\end{align*}
$\left( g_{i\bar{j}} \right)$ is a positive definite Hermitian matrix. The K\"ahler metric is (locally):

$$
g = g_{i\bar{j}} d z^i \otimes d \bar{z}^j.
$$

The set of K\"ahler metrics can be identified with $\mathcal{H} / \RR$, where

\begin{align*}
  \mathcal{H} = \{ \varphi \in C^\infty(X) ~ | ~ \omega_\varphi = \omega + \sqrt{-1}\partial\bar{\partial} \varphi > 0 \}.
\end{align*}

We call $\varphi \in \mathcal{H}$ is a relative K\"ahler potential. The corresponding K\"ahler metric is 

\begin{align*}
  g_\varphi =  \left( g_{i\bar{j}} + \varphi_{i \bar{j}} \right) d z^i \otimes d \bar{z}^j.
\end{align*}

Its volume form is

\begin{align*}
  \omega_\varphi^n = \frac{\left( \sqrt{-1} \right)^n}{n!} \det\left( g_{i\bar{j}} \right) d z^1 \wedge d \bar{z}^1 \wedge \cdots \wedge d z^n \wedge d \bar{z}^n,
\end{align*}
where $n$ is the complex dimension of $X$. Its Ricci and scalar curvature are:

\begin{align*}
  Ric_\varphi = - \partial \bar{\partial} \log \det(g_{\varphi, i\bar{j}}), \quad R_\varphi = - \triangle_\varphi \log \det(g_{\varphi, i\bar{j}}).
\end{align*}

The Calabi flow \cite{Ca1, Ca2} starting from $\omega_\varphi$ is defined as:

\begin{align*}
  \frac{\partial \varphi}{\partial t} = R_\varphi - \underline{R},
\end{align*}

where $\underline{R} = \frac{\int_X R_\varphi ~ \omega_\varphi^n}{\int_X ~ \omega_\varphi^n}$ is a topological constant. The short time existence of the Calabi flow is established in \cite{ChenHe1}.

Now we introduce three functionals $I, J, D$. The $I$ and $J$ functionals are introduced by Aubin \cite{Au}: for any $\varphi \in \mathcal{H}$,

\begin{align*}
  I(\varphi) = & \int_X \varphi (\omega^n - \omega_\varphi^n) = \sqrt{-1} \sum_{i=0}^{n-1} \int_X \partial \varphi \wedge \partial \bar{\varphi} \wedge \omega^i \wedge \omega_\varphi^{n-1-i}, \\
  J(\varphi) = & \sqrt{-1} \sum_{i=0}^{n-1} \frac{i+1}{n+1} \int_X \partial \varphi \wedge \partial \bar{\varphi} \wedge \omega^i \wedge \omega_\varphi^{n-1-i}.
\end{align*}

We have

\begin{align*}
  \frac{1}{n+1}I \le J \le \frac{n}{n+1}I.
\end{align*}

The third functional is the $D$ functional defined as follows: (c.f. \cite{Di})

\begin{align*}
  D(\varphi) = \int_X \varphi ~ \omega^n - J(\varphi).
\end{align*}

Direct calculations show that 

\begin{align*}
  \delta \left( D(\varphi) \right)(f) = \int_X f ~ \omega_\varphi^n.
\end{align*}

Thus $D(t) = const$ along the Calabi flow.

\subsection{Toric geometry}

Let $X$ be a toric manifold and $\omega$ be a toric invariant K\"ahler metric. We obtain the Delzant polytope $P$ of $X$ through the moment map. On $P$, we have the standard Lebesgue measure $d \mu$. On each facet $P_i$ of $P$, the measure $d \sigma$ equals to $\frac{1}{|\vec{n}_i|}$ times the standard Lebesgue measure, where $\vec{n}_i$ is an inward normal vector associated to $P_i$. Also for any vertex $v$ of $P$, there exists exactly $n$ facets $P_1, \ldots, P_n$ intersecting at $v$ and $(\vec{n}_1, \ldots, \vec{n}_n)$ is a basis of $\ZZ^n$.

Suppose that $P$ has $d$ facets. Each facet $P_i$ can be represented by

\begin{align*}
  l_i(x) = 0, 
\end{align*}

where $l_i(x) = \langle x, \vec{n}_i \rangle + c_i$. A symplectic potential $u$ of $P$ satisfies the following Guillemin boundary conditions \cite{Gu}:

\begin{itemize}
  \item $u$ is a smooth, strictly convex function on $P$.

  \item The restriction of $u$ to each facet of $P$ is also a smooth, strictly convex function.

  \item 

    \begin{align*}
      u(x) = \frac{1}{2} \sum_{i=1}^d l_i(x) \ln l_i(x) + f(x), ~ x \in P,
    \end{align*}

    where $f(x)$ is a smooth function on $\bar{P}$.
\end{itemize}

On the open orbit of the $(\CC^*)^n = \RR^n \times \TT^n$ action of $X$, a toric invariant K\"ahler metric $\omega$ can be express as

$$
\omega = \psi_{ij} d z^i \wedge d \bar{z}^j,
$$
where $\psi$ is a real, smooth, strictly convex function on $\RR^n$ and $z_i = \xi_i + \sqrt{-1} \theta_i, ~ (\xi_1, \ldots, \xi_n) \in \RR^n, ~ (\theta_1, \ldots, \theta_n) \in \TT^n$. In fact, the Legendre dual of $\psi$ is a symplectic potential $u$ on $P$. Thus

\begin{align*}
  \psi(\xi) + u(x) = \sum_{i=1}^n \xi_i x_i, \quad x = \nabla \psi(\xi).
\end{align*}

The following Proposition is due to Donaldson \cite{D1}:

\begin{prop}
  For any toric invariant $\varphi \in \mathcal{H}$, we obtain a symplectic potential $u_\varphi$ through the Legendre dual of $\psi + \varphi$. The map from $\varphi$ to $u_\varphi$ is one to one and onto.
\end{prop}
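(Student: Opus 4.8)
The plan is to identify the assignment $\varphi \mapsto u_\varphi$ with the Legendre transform restricted to those strictly convex functions on $\RR^n$ that arise as potentials of toric Kähler metrics on $X$, and then to verify well-definedness, injectivity, and surjectivity in turn. Throughout I regard a toric invariant function as a function of the real variable $\xi$ on the open orbit. The first observation is that, on the open orbit, $\omega_\varphi = \omega + \sqrt{-1}\partial\bar\partial\varphi$ is the complex Hessian of $h := \psi + \varphi$, so that $\omega_\varphi > 0$ is equivalent to positive definiteness of the real Hessian $\mathrm{Hess}_\xi\, h$, i.e. to $h$ being a smooth, strictly convex function on $\RR^n$.

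\emph{Well-definedness.} Given such an $h$, I would first show that its gradient map $\nabla h : \RR^n \to \RR^n$ is a diffeomorphism onto the interior $P^\circ$. Injectivity is immediate from strict convexity, and the image equals $P^\circ$ because the moment polytope is a cohomological invariant: $\omega_\varphi$ and $\omega$ lie in the same Kähler class, so the image of the moment map $\nabla h$ coincides with that of $\nabla \psi$, namely $P^\circ$. (Equivalently, one checks that the asymptotics of $\nabla\psi$ near each facet are preserved under the smooth perturbation $\nabla\varphi$.) The Legendre dual, which I denote $u_\varphi := h^{*}$, is then a well-defined smooth strictly convex function on $P^\circ$, with $\nabla u_\varphi = (\nabla h)^{-1}$ and $\mathrm{Hess}(u_\varphi) = (\mathrm{Hess}\, h)^{-1}$.

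It remains to check that $u_\varphi$ genuinely satisfies the Guillemin boundary conditions, which I expect to be the main obstacle. Since the reference symplectic potential $u = \psi^{*}$ already has the form $\tfrac{1}{2}\sum_i l_i \ln l_i + f_0$ with $f_0 \in C^\infty(\bar P)$, it suffices to prove $u_\varphi - u \in C^\infty(\bar P)$; the strict convexity on $P$ and on each facet then follows from that of the metric. This is precisely the Abreu--Guillemin regularity dictionary: a smooth strictly convex $h$ on $\RR^n$ satisfies $h^{*} - u \in C^\infty(\bar P)$ if and only if $h - \psi$ extends to a smooth toric invariant function on the compact variety $X$. Since $\varphi = h - \psi$ is smooth on $X$ by hypothesis, the forward implication yields $u_\varphi - u \in C^\infty(\bar P)$. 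The delicate point is this regularity transfer across the toric divisors, where the $\TT^n$-action degenerates and $\psi$ itself is singular; the standard device is to work in the holomorphic coordinates adapted to each vertex and to track the cancellation of the logarithmic singularities of $u$ and $u_\varphi$.

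Finally, injectivity follows from the involutivity of the Legendre transform: $u_{\varphi_1} = u_{\varphi_2}$ gives, on taking Legendre duals, $\psi + \varphi_1 = \psi + \varphi_2$ on the open orbit, hence $\varphi_1 = \varphi_2$ on all of $X$ by continuity. For surjectivity, given any symplectic potential $v$ on $P$, set $\psi_v := v^{*}$ and $\varphi := \psi_v - \psi$; then $\omega_\varphi > 0$ by strict convexity of $\psi_v$, and the converse implication of the dictionary above (using $v - u \in C^\infty(\bar P)$) shows that $\varphi$ extends to a smooth toric invariant function on $X$. Thus $\varphi \in \mathcal{H}$ with $u_\varphi = \psi_v^{*} = v$, completing the proof.
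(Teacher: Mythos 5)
Your skeleton is the standard one — identify $\varphi \mapsto u_\varphi$ with the Legendre transform, get injectivity from involutivity of the transform, get the image of $\nabla h$ equal to $P^\circ$ (your cohomological argument for this is fine, and so is the alternative you could have used: $|h - \psi| \le C$ implies $|h^* - u| \le C$, so $h^*$ is finite exactly on $\bar P$) — but the proof has a circularity at its center. The entire nontrivial content of the proposition is the regularity equivalence: $h^* - u \in C^\infty(\bar P)$ if and only if $h - \psi$ extends smoothly to $X$. You invoke this as "the Abreu--Guillemin regularity dictionary" and then use its forward implication for well-definedness and its converse for surjectivity. But that dictionary \emph{is} the proposition, up to the routine Legendre formalities you do carry out; citing it as a known tool while presenting the rest as a proof means the one step you correctly identify as "the main obstacle" and "the delicate point" is exactly the step you never perform. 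To close the gap you would have to actually execute the vertex-chart computation you allude to: near the fixed point over a vertex $v$, normalize so $P$ is the standard orthant with model potential $\frac{1}{2}\sum_i x_i \ln x_i$ (the flat metric on $\CC^n$), pass between the holomorphic coordinates $w_i$ and the log coordinates $\xi_i$, and show that smoothness of a $\TT^n$-invariant function in $w$ (which, via a Whitney/Schwarz-type theorem, means smoothness as a function of the $|w_i|^2$) transfers under Legendre duality to smoothness of $u_\varphi - u$ up to $\partial P$, together with the strict convexity of the restriction to each facet — none of which is automatic.

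For context: the paper itself offers no proof at all; it states the proposition as "due to Donaldson" and cites \cite{D1}, so the honest alternatives here were either a bare citation (as the paper does) or a genuine proof of the boundary regularity transfer. Your write-up sits between the two: everything you prove (injectivity, the diffeomorphism property of $\nabla h$, the reduction to $u_\varphi - u \in C^\infty(\bar P)$) is correct but easy, and everything hard is delegated to an unproved equivalence. One further point worth flagging if you do carry out the computation: the Guillemin-type condition as stated in the paper ($u - \frac{1}{2}\sum_i l_i \ln l_i$ smooth on $\bar P$, plus strict convexity of facet restrictions) requires careful bookkeeping in the correspondence — Donaldson himself noted that the precise characterization of which symplectic potentials arise from smooth metrics is delicate — so "the strict convexity on each facet then follows from that of the metric" also deserves an argument rather than an assertion.
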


Using the Legendre transform, Abreu \cite{Ab} shows that the scalar curvature equation of a toric invariant metric $\omega_\varphi$ on the open $(\CC^*)n$ orbit 

\begin{align*}
  R_\varphi = \triangle_\varphi \log \det(D^2 (\psi + \varphi))
\end{align*}

can be transformed to

\begin{align*}
  R_{u_\varphi} = - \sum_{i,j=1}^n u_{\varphi, ~ ij}^{ij}
\end{align*}

in the symplectic side. Thus the Calabi flow equation on $P$ is

\begin{align*}
  \frac{\partial u}{\partial t} = \underline{R} - R_u.
\end{align*}

\section{Controlling $\max \varphi$}
Let $u(t, x)$ be a sequence of symplectic potentials satisfying the Calabi flow equation on $P$. Our first lemma is

\begin{lemma}
  \begin{align*}
    \| u(t,x) \|_{L^2} < C(P, T, u_0),
  \end{align*}
  where $C(P, T, u_0)$ is a constant depending on $P, T$ and $u(0, x)$.
\end{lemma}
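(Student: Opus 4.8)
The plan is to differentiate the $L^2$ norm along the flow and reduce everything to conserved quantities plus a single boundary integral. Using the Calabi flow equation $\partial_t u = \underline{R} - R_u$ on $P$, I would first write
\begin{align*}
  \frac{d}{dt}\int_P u^2\, d\mu = 2\int_P u\,(\underline{R} - R_u)\, d\mu.
\end{align*}
The term $\int_P u\,\underline{R}\, d\mu = \underline{R}\int_P u\, d\mu$ is harmless, so the heart of the matter is $\int_P u R_u\, d\mu$. Here I would invoke Donaldson's integration-by-parts identity: for any $f$ smooth on $\bar{P}$,
\begin{align*}
  \int_P R_u\, f\, d\mu = 2\int_{\partial P} f\, d\sigma - \int_P \sum_{i,j} u^{ij} f_{ij}\, d\mu.
\end{align*}
Taking $f = u$ and using $\sum_{i,j} u^{ij} u_{ij} = n$ gives $\int_P u R_u\, d\mu = 2\int_{\partial P} u\, d\sigma - n\int_P d\mu$, whence
\begin{align*}
  \frac{d}{dt}\int_P u^2\, d\mu = 2\underline{R}\int_P u\, d\mu - 4\int_{\partial P} u\, d\sigma + 2n\int_P d\mu.
\end{align*}

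Next I would record the conservation laws. Since $\int_P(\underline{R} - R_u)\, d\mu = \underline{R}\int_P d\mu - 2\int_{\partial P} d\sigma = 0$ (this is exactly the defining relation for $\underline{R}$), the zeroth moment $\int_P u\, d\mu$ is constant along the flow; taking $f = x_k$ in Donaldson's identity shows likewise that every first moment $\int_P x_k u\, d\mu$ is constant. Consequently, subtracting from $u$ the fixed affine function $a_0$ determined by the (conserved) moments of $u_0$, the normalized potential $\tilde{u} = u - a_0$ stays convex with vanishing zeroth and first moments for all $t$, and $\int_{\partial P} u\, d\sigma = \int_{\partial P}\tilde{u}\, d\sigma + c(P, u_0)$ with $c$ independent of $t$. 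Plugging this back, the time derivative becomes $-4\int_{\partial P}\tilde{u}\, d\sigma$ plus constants depending only on $P$ and $u_0$.

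The one remaining ingredient — and the step I expect to be the main obstacle — is a uniform lower bound $\int_{\partial P}\tilde{u}\, d\sigma \ge -C(P)$ for convex functions with vanishing zeroth and first moments. This is a purely convex-geometric statement on the polytope: a normalized convex function cannot be forced arbitrarily negative on the boundary without violating the moment constraints. In one dimension the chord estimate (a convex function lies below its linear interpolation) gives $\int_{\partial P} \tilde{u}\, d\sigma \ge \int_P \tilde{u}\, d\mu = 0$ directly, and in general I would combine the same comparison with Jensen's inequality at the centroid to control both the value and the gradient of $\tilde{u}$ at an interior reference point. I expect this to be where the real work lies, since it is precisely the toric analogue of controlling boundary behavior, which is delicate because of the Guillemin log-singularities.

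Granting this lower bound, I obtain $\frac{d}{dt}\int_P u^2\, d\mu \le K(P, u_0)$ uniformly in $t$. Integrating over $[0,T)$ then yields $\int_P u^2\, d\mu \le \int_P u_0^2\, d\mu + K(P,u_0)\,T$, which is the desired bound $\| u(t,x) \|_{L^2} < C(P,T,u_0)$.
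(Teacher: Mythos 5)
Your strategy is genuinely different from the paper's, and the energy identity itself is set up correctly: with Abreu's formula $R_u = -\sum_{i,j} u^{ij}_{ij}$ and the integration-by-parts identity one does get
\begin{align*}
  \frac{d}{dt}\int_P u^2\, d\mu \;=\; 2\underline{R}\int_P u\, d\mu \;-\; 4\int_{\partial P} u\, d\sigma \;+\; 2n\,\mathrm{Vol}(P),
\end{align*}
and the zeroth moment is conserved because $\underline{R}\,\mathrm{Vol}(P) = 2\int_{\partial P} d\sigma$. However, your claim that the first moments are conserved is false in general: taking $f = x_k$ gives $\frac{d}{dt}\int_P x_k u\, d\mu = \underline{R}\int_P x_k\, d\mu - 2\int_{\partial P} x_k\, d\sigma$, which is a constant but vanishes only when the Futaki invariant of $(X,P)$ vanishes (it is nonzero, e.g., on the one-point blow-up of $\CC\PP^2$). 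This particular error is repairable, since on $[0,T)$ the first moments then grow at most linearly and your constant is allowed to depend on $T$; but the normalizing affine function $a_0$ must be taken $t$-dependent.

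The serious problem is the step you yourself flag: the uniform lower bound $\int_{\partial P}\tilde{u}\, d\sigma \ge -C$ for convex functions with controlled zeroth and first moments. This is exactly where the difficulty of the lemma is concentrated, and your sketch does not close it in dimension $n \ge 2$. Moment bounds give neither pointwise nor $L^1$ control of a convex function: on a centrally symmetric polytope the functions $v = K|x| - cK$ (with $c$ chosen so $\int_P v\, d\mu = 0$) have vanishing zeroth and first moments, yet $\min v \to -\infty$ and $\|v\|_{L^1} \to \infty$ as $K \to \infty$. So you cannot add a constant to reduce to the known boundary-versus-interior estimates for \emph{nonnegative} convex functions (Donaldson, Zhou--Zhu), and the natural cone-decomposition-plus-Jensen argument over the centroid produces the boundary measure $h\, dS$ ($h$ the distance from the centroid to each facet), which differs from $d\sigma$ by a factor $h_i|\vec{n}_i|$ that varies from facet to facet, so the resulting facet-wise bounds cannot be summed against $\int_P v\, d\mu$ alone. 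The needed estimate is plausibly true, but as written your proof has a hole at its central step. For comparison, the paper's proof avoids all of this: it uses the Calabi--Chen theorem that the Calabi flow decreases geodesic distance, together with the fact that the Mabuchi distance between toric potentials is the flat $L^2(P)$ distance between symplectic potentials; a triangle inequality and smoothness of the flow on the compact interval $[0, T/2]$ then give the uniform bound, with no convex-geometric input and no boundary terms at all.
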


\begin{proof}
  Since the Calabi flow decreases the distance \cite{CC}, for any $t \in [0, T)$, we have

    \begin{align*}
      \| u(\frac{t}{2}, x) - u(0, x) \|_{L^2} \ge \| u(t,x) - u(\frac{t}{2}, x)\|_{L^2}.
    \end{align*}

  By the triangle inequality, we have

    \begin{align*}
      2 \| u(\frac{t}{2}, x) \|_{L^2} + \| u(0, x) \|_{L^2} \ge \| u(t,x) \|_{L^2}.
    \end{align*}
  
  Similarly, 

  \begin{align*}
    2 \| u(\frac{t}{4}, x) \|_{L^2} + \| u(0, x) \|_{L^2} \ge \| u(\frac{t}{2},x) \|_{L^2}.
    \end{align*}

Thus,

    \begin{align*}
      4 \| u(\frac{t}{4}, x) \|_{L^2} + 3 \| u(0, x) \|_{L^2} \ge \| u(t,x) \|_{L^2}.
    \end{align*}

    It is easy to see that there exists a constant $C(P, T, u_0)$ depending on $P, T$ and $u(0, x)$ such that

  \begin{align*}
    \| u(t,x) \|_{L^2} < C(P, T, u_0),
  \end{align*}
\end{proof}

An immediate corollary is:

\begin{cor}
  $$ \min_{x \in \bar{P}} u(t,x) < C_1(P, T, u_0), $$
  where $C_1(P, T, u_0)$ is some constant depending on $P, T$ and $u(0, x)$.
\end{cor}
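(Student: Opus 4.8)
The plan is to read this off directly from the $L^2$ estimate of the preceding lemma, using only that $u(t,\cdot)$ is a genuine finite-valued function on $\bar P$ whose minimum is attained. The key point is simply that an $L^2$ bound controls how large a function can be where it is positive, and in particular controls its (positive part of the) minimum.

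First I would record that $m(t) := \min_{x \in \bar P} u(t,x)$ is well defined and finite. This is where the Guillemin boundary conditions enter: although $u$ contains the terms $\tfrac12 \sum_i l_i(x)\ln l_i(x)$, each such term extends continuously up to $\bar P$ because $s \ln s \to 0$ as $s \to 0^+$, while the remaining piece $f$ is smooth on $\bar P$. Hence $u(t,\cdot)$ is continuous on the compact set $\bar P$ and attains a finite minimum $m(t)$.

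Next comes the main observation. Since $u(t,x) \ge m(t)$ for every $x \in \bar P$, in the case $m(t) \ge 0$ we have $u(t,x)^2 \ge m(t)^2$ pointwise, so
$$ \| u(t,\cdot) \|_{L^2}^2 = \int_P u(t,x)^2 \, d\mu \ge m(t)^2 \, \mathrm{Vol}(P), $$
where $\mathrm{Vol}(P) = \int_P d\mu > 0$. Combining with the lemma gives $m(t) \le \| u(t,\cdot) \|_{L^2} / \sqrt{\mathrm{Vol}(P)} < C(P,T,u_0)/\sqrt{\mathrm{Vol}(P)}$. If instead $m(t) < 0$, the bound is trivial. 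Setting $C_1(P,T,u_0) = \max\{ 1, \, C(P,T,u_0)/\sqrt{\mathrm{Vol}(P)} \}$ then handles both cases at once, uniformly in $t \in [0,T)$.

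I do not expect a serious obstacle here; this really is an immediate corollary. The only two points requiring any care are the ones flagged above: verifying that the boundary singularities of a symplectic potential are mild enough (via $s\ln s \to 0$) that $\min_{\bar P} u$ is a finite number rather than $-\infty$, and noticing that the $L^2$ norm only bounds the minimum from above when that minimum is positive, so a one-line sign case-split is needed. Notably, convexity of $u$ is not used in this particular estimate; it is only the elementary inequality $\int_P u^2 \, d\mu \ge m(t)^2 \, \mathrm{Vol}(P)$ (valid when $m(t) \ge 0$) that does all the work.
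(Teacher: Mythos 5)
Your proof is correct, and it is essentially the argument the paper intends: the paper states this as an ``immediate corollary'' of the $L^2$ bound, and the immediate deduction is exactly your observation that a positive minimum $m(t)$ would force $\| u(t,\cdot)\|_{L^2}^2 \ge m(t)^2\,\mathrm{Vol}(P)$. Your additional care about finiteness of the minimum (via $s\ln s \to 0$ in the Guillemin boundary conditions) and the sign case-split are fine touches but do not change the route.
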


Since the $L^2$ norm of $u(t, x)$ is bounded by $C(P, T, u_0)$, our next lemma shows that $\min_{x \in \bar{P}} u(x)$ is bounded from below by some constant $C_2(P, T, u_0)$ which depends on $u(0, x), P$ and $T$.

\begin{lemma}
  If $\| u(t, x) \|_{L^2} < C(P, T, u_0)$ for any $t < T$, then
  \begin{align*}
    \min_{x \in \bar{P}} u(x, t) > C_2(P, T, u_0)
  \end{align*}
\end{lemma}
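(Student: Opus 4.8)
The plan is to exploit the convexity of the symplectic potential $u(t,\cdot)$ together with the $L^2$ bound just established; the only structural facts needed are that $u$ is convex on $P$ and continuous on $\bar P$ (so the minimum is attained), that $\bar P$ is compact and convex, and that $\|u\|_{L^2} < C := C(P,T,u_0)$. The mechanism is that a convex function lies below its chords, so a very deep minimum cannot be localized: it forces $u$ to be comparably negative on a set of definite measure, which in turn forces the $L^2$ norm to be large.

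First I would anchor $u$ on a fixed interior piece of $P$. Choose once and for all a ball $B = B(x^*, \rho)$ with $\bar B \subset P$ (possible since $P$ is open, convex and nonempty), so $\rho$ and $x^*$ depend only on $P$. By Cauchy--Schwarz $\int_B |u|\, d\mu \le |B|^{1/2}\|u\|_{L^2} \le |B|^{1/2} C$, so by Chebyshev's inequality the bad set $\{y \in B : |u(y)| > \Lambda\}$ has measure at most $C^2/\Lambda^2$; choosing $\Lambda = \Lambda(P,C)$ large enough that this is $\le |B|/2$ produces a subset $B' \subset B$ with $|B'| \ge |B|/2$ on which $|u| \le \Lambda$ pointwise.

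Next I would propagate the minimum. Let $x_0 \in \bar P$ realize $m := \min_{\bar P} u = u(x_0)$ and write $M = -m$; we may assume $M > 0$. Consider the midpoint set $\Omega = \{\tfrac12(x_0 + y) : y \in B'\}$. Since $x_0, y \in \bar P$ and $\bar P$ is convex, $\Omega \subset \bar P$, and since $\Omega$ is a translate of $\tfrac12 B'$ we have $|\Omega| = 2^{-n}|B'| \ge 2^{-n-1}|B| =: v_0 > 0$, a constant depending only on $P$. By convexity of $u$, for $w = \tfrac12(x_0 + y) \in \Omega$ with $y \in B'$,
\[
u(w) \le \tfrac12 u(x_0) + \tfrac12 u(y) \le -\tfrac{M}{2} + \tfrac{\Lambda}{2}.
\]
Hence if $M \ge 2\Lambda$ then $u \le -M/4$ throughout $\Omega$, so $u^2 \ge M^2/16$ on $\Omega$ and
\[
C^2 \ge \|u\|_{L^2}^2 \ge \int_\Omega u^2\, d\mu \ge \frac{M^2}{16}\, v_0 .
\]
This gives $M \le 4C/\sqrt{v_0}$; combined with the alternative $M < 2\Lambda$, we conclude $M \le C_2(P,T,u_0)$, i.e. $\min_{\bar P} u(t,\cdot) = -M \ge -C_2$.

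The argument contains no serious obstacle; the one point requiring care is the passage from a pointwise minimum to an $L^2$ lower bound, which is exactly where convexity enters: a convex function lies below the chord joining $x_0$ to the anchor ball, so the deep value at $x_0$ is inherited, up to the controlled term $\Lambda$, on the positive-measure midpoint set $\Omega$. One should also check the two routine facts used silently, namely that $u$ extends continuously to $\bar P$ so the minimum is attained (immediate from the Guillemin boundary form $u = \tfrac12\sum_i l_i \ln l_i + f$ with $f \in C^\infty(\bar P)$), and that the convexity inequality persists up to the boundary for $x_0 \in \partial P$ by continuity. All of $\rho, |B|, v_0$ depend only on $P$, while $\Lambda$ and $C_2$ depend on $P$ and on $C = C(P,T,u_0)$, hence on $P, T$ and $u_0$, as required.
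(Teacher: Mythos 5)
Your proof is correct, and it runs on the same engine as the paper's own argument --- convexity propagates the deep minimum at $x_0$ along segments toward a fixed interior anchor where $u$ is controlled from above, producing a set of definite measure on which $u$ is very negative, which the $L^2$ bound forbids --- but your implementation of both halves is genuinely different and more elementary. For the anchor, the paper proves a \emph{pointwise} upper bound $u \le C_3(P,T,u_0)$ on the whole dilated polytope $P_{\frac{1}{2}}$, using convexity a second time: at an interior point where $u$ is large, a supporting hyperplane cuts $P$ into two pieces of volume at least $C_4(P)$, on one of which $u$ stays large, contradicting the $L^2$ bound. You instead take a fixed ball $B$ with $\bar{B} \subset P$ and use Chebyshev to extract a subset $B' \subset B$ with $|B'| \ge |B|/2$ on which $|u| \le \Lambda(P,C)$; this is weaker (a large-measure bound rather than a pointwise one) but purely measure-theoretic, and it is all the second half needs. (Minor remark: your Cauchy--Schwarz sentence is superfluous --- the bound $C^2/\Lambda^2$ on the bad set comes straight from Chebyshev applied to $u^2$.) For the propagation, the paper selects a facet $Q$ of $P_{\frac{1}{2}}$ at distance at least $C_5(P)$ from $x_0$, shrinks it toward $x_0$, and integrates $u^2$ over the resulting cone $\tilde{P}$, which requires uniform lower bounds on that distance and on the volume of $\tilde{P}$ that the paper only sketches; your midpoint set $\Omega = \frac{1}{2}(x_0 + B')$ delivers the volume bound $|\Omega| = 2^{-n}|B'|$ for free, with no case analysis on where $x_0$ sits. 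The trade-off: the paper's route yields a pointwise interior estimate of some independent interest, while yours is shorter, avoids both geometric sub-arguments, and makes every constant explicit; your closing attention to the boundary issues (continuity of $u$ on $\bar{P}$ from the Guillemin form, hence attainment of the minimum and convexity up to the boundary) covers points the paper uses silently.
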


\begin{proof}
  To simplify our notation, we write $u(t, x)$ as $u(x)$. Let $o$ be the barycenter of $P$. By shrinking the vertices by $\frac{1}{2}$ around $o$, we get $P_{\frac{1}{2}}$. Our first observation is that there exists a constant $C_3(P, T, u_0)$ depending on $P, T$ and $u(0, x)$ such that for any $x \in P_{\frac{1}{2}}$, we have

  \begin{align*}
    u(x) < C_3(P, T, u_0).
  \end{align*}
  
  This is because:
  \begin{itemize}
    \item $u(x)$ is a convex function.

    \item For any $x \in P_{\frac{1}{2}}$, any hyperplane $l$ passing through $x$ will cut $P$ as $P_1$ and $P_2$. There exists a positive constant $C_4(P)$ depending on $P$ such that the Euclidean volume of $P_1$ and $P_2$ are both greater than $C_4(P)$.
  \end{itemize}

  Let $x_0 \in \bar{P}$ be a point such that $u(x_0) = \min_{x \in \bar{P}} u(x)$. Then there exists a constant $C_5(P)$ depending on $P$ such that the follwing holds:
  \begin{itemize}
    \item 
  	At least one facet of $P_{\frac{1}{2}}$, say $Q \subset \partial P_{\frac{1}{2}}$ such that the Euclidean distance between $x_0$ and $Q$ is greater than $C_5(P)$.
  \end{itemize}

  By shrinking the vertices of $Q$ by $\frac{1}{2}$ around $x_0$, we obtain $Q_{\frac{1}{2}}$. We denote $\tilde{P}$ as the convex hull of $x_0$ and $Q_{\frac{1}{2}}$. Then for any point $x \in \tilde{P}$, we have
  \begin{align*}
    u(x) \le \frac{u(x_0) + C_3(P, T, u_0)}{2}.
  \end{align*}
  Since we know
  \begin{align*}
    \int_{\tilde{P}} u^2(x) ~ d \mu \le \int_P u^2(x) ~ d \mu < C_1(P, T, u_0)^2,
  \end{align*}
  it is clear that there exists a constant $C_2(P, T, u_0)$ depending on $P$, $T$ and $u(0, x)$ such that
  \begin{align*}
    \min_{x \in \bar{P}} u(x, t) > C_2(P, T, u_0).
  \end{align*}
\end{proof}

Translating our result to the complex side, we have the following proposition:

\begin{prop}
  \label{upper_bound_max}
  There exists a constant $C_3(P, T, u_0)$ depending on $P, T$ and $u(0, x)$ such that for any $t < T$ we have
  \begin{align*}
    \max_{z \in X} \varphi(t, z) < C_3(P, T, u_0).
  \end{align*}
\end{prop}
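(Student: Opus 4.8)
The plan is to pass to the symplectic side and estimate $\max_X \varphi$ by the Fenchel--Young (Legendre) inequality, and then feed in the lower bound on $\min_{\bar P} u$ just established. First I would note that since $\varphi(t,\cdot)$ is continuous on the compact $X$ and the open $(\CC^*)^n$-orbit is dense, we have $\max_{z\in X}\varphi(t,z) = \sup_{\xi\in\RR^n}\varphi(\xi)$, where on the open orbit $\varphi = (\psi+\varphi) - \psi$ and $\psi+\varphi$ is the full toric invariant potential of $\omega_\varphi$. Let $\hat u$ denote the symplectic potential of the background metric $\omega$, i.e.\ the Legendre dual of $\psi$, and recall that the current symplectic potential $u = u_\varphi$ is the Legendre dual of $\psi+\varphi$.

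The heart of the argument is a pointwise inequality. Fix $\xi\in\RR^n$ and set $x = \nabla(\psi+\varphi)(\xi)\in P$. By the defining Legendre relation for the dual pair $(\psi+\varphi,\,u_\varphi)$,
\[
(\psi+\varphi)(\xi) = \langle \xi, x\rangle - u_\varphi(x).
\]
Since $\hat u(y) = \sup_{\eta}\big(\langle \eta,y\rangle - \psi(\eta)\big)$ is the convex conjugate of $\psi$, the Fenchel--Young inequality gives, for this same $x$,
\[
\psi(\xi) \ge \langle \xi, x\rangle - \hat u(x).
\]
Subtracting, the linear terms cancel and
\[
\varphi(\xi) = (\psi+\varphi)(\xi) - \psi(\xi) \le \hat u(x) - u_\varphi(x) \le \max_{\bar P}\hat u - \min_{\bar P} u_\varphi .
\]

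To finish, I would observe that $\hat u$ satisfies the Guillemin boundary conditions, so $\hat u = \tfrac12\sum_i l_i \ln l_i + (\text{smooth on }\bar P)$ is continuous on the compact $\bar P$; hence $\max_{\bar P}\hat u$ is a finite constant depending only on $P$ and the fixed background $\omega$. Combining this with the previous lemma, which yields $\min_{\bar P} u_\varphi = \min_{x\in\bar P} u(t,x) > C_2(P,T,u_0)$ for all $t<T$, we conclude
\[
\max_{z\in X}\varphi(t,z) = \sup_{\xi}\varphi(\xi) \le \max_{\bar P}\hat u - C_2(P,T,u_0) =: C_3(P,T,u_0),
\]
as desired. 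There is no serious obstacle here: everything reduces to the routine Legendre-duality bookkeeping above. The only points requiring a little care are recognizing that the right translation of $\max\varphi$ to the symplectic side comes from applying Fenchel--Young to $\psi$ while using the exact duality for $\psi+\varphi$ (so that the linear term cancels), checking that the supremum over the dense open orbit indeed computes $\max_X\varphi$, and confirming that $\hat u$ is bounded above up to $\partial P$ so that $\max_{\bar P}\hat u$ is a genuine constant.
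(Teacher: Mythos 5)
Your proposal is correct and follows essentially the same route as the paper: the paper likewise evaluates the exact Legendre identity at $x=\nabla(\psi+\varphi)(\xi)$ for the time-$t$ potential, bounds $\psi(0,\xi)=\max_{y\in\bar P}\left(\langle\xi,y\rangle-u(0,y)\right)$ from below by its value at $y=x$ (your Fenchel--Young step), cancels the linear terms to get $\varphi(t,\xi)\le u(0,x)-u(t,x)$, and then invokes the upper bound on $u_0$ over $\bar P$ together with the preceding lemma's lower bound $\min_{\bar P}u(t,\cdot)>C_2(P,T,u_0)$. Your additional remarks (density of the open orbit and continuity of the background symplectic potential up to $\partial P$) are details the paper leaves implicit, but they are correct and harmless.
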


\begin{proof}
  We write $x = \nabla(\psi(t, \xi)), ~ \xi \in \RR^n$. Then we have

  \begin{align*}
    \varphi(t, \xi) & = \psi(t, \xi) - \psi(0, \xi)  \\
    & = \left( \sum_{i=1}^n \xi x - u(t, x) \right) - \max_{y \in P} \left( \sum_{i=1}^n \xi y - u(0, y) \right) \\
    & \le u(0, x) - u(t, x) \\
    & < C_3(P, T, u_0).
  \end{align*}
\end{proof}

A conjecture of Donaldson, proved by Chen \cite{Chen1}, saying that 

\begin{lemma}
  \begin{align*}
    d(0, \varphi) \ge \frac{1}{\sqrt{C(P)}} \left( \max \left( \int_{\varphi > 0} \varphi ~ \omega_{\varphi}^n, ~ - \int_{\varphi < 0} \varphi ~ \omega^n \right) \right),
  \end{align*}
  where $C(P) = (2 \pi)^n Vol(P)$.
\end{lemma}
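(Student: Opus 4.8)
The plan is to read $d(0,\varphi)$ as the geodesic distance of the Mabuchi $L^2$ metric on $\mathcal{H}$, for which a path $\varphi(s)$, $s \in [0,1]$, joining $0$ to $\varphi$ has length $L = \int_0^1 \left( \int_X \dot\varphi(s)^2 ~ \omega_{\varphi(s)}^n \right)^{1/2} ds$. Since the total volume $V = \int_X \omega_\varphi^n = (2\pi)^n Vol(P) = C(P)$ is independent of the potential, it suffices to estimate a length-minimizing geodesic from below. Two elementary facts drive everything. First, for any $s$ and any measurable $\chi$ with $|\chi| \le 1$, the Cauchy--Schwarz inequality gives $\left( \int_X \dot\varphi^2 ~ \omega_{\varphi(s)}^n \right)^{1/2} \ge \frac{1}{\sqrt{V}} \int_X \dot\varphi ~ \chi ~ \omega_{\varphi(s)}^n$, which is exactly the origin of the factor $1/\sqrt{C(P)}$. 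Second, the geodesic equation $\ddot\varphi = |\partial \dot\varphi|^2_{\varphi(s)} \ge 0$ shows that for each fixed $x$ the map $s \mapsto \varphi(s,x)$ is convex, so $\dot\varphi(s,x)$ is non-decreasing in $s$; combined with $\varphi(0,x)=0$ this yields the sandwich $\dot\varphi(0,x) \le \varphi(x) = \int_0^1 \dot\varphi(s,x)\,ds \le \dot\varphi(1,x)$.

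I would prove the two bounds separately, one at each endpoint of the geodesic. For the first term I evaluate at $s=1$, where the measure is $\omega_\varphi^n$ and, by the constant speed of a minimizing geodesic, $\left( \int_X \dot\varphi(1)^2 ~ \omega_\varphi^n \right)^{1/2} = d(0,\varphi)$. On $\{\varphi > 0\}$ the sandwich gives $\dot\varphi(1,x) \ge \varphi(x) > 0$, so choosing $\chi = \mathbf{1}_{\{\varphi > 0\}}$ in the Cauchy--Schwarz bound yields $d(0,\varphi) \ge \frac{1}{\sqrt{V}} \int_{\varphi > 0} \dot\varphi(1) ~ \omega_\varphi^n \ge \frac{1}{\sqrt{V}} \int_{\varphi > 0} \varphi ~ \omega_\varphi^n$.

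The second term is entirely symmetric and is handled at $s=0$, where the relevant measure is $\omega^n = \omega_{\varphi(0)}^n$ and $\left( \int_X \dot\varphi(0)^2 ~ \omega^n \right)^{1/2} = d(0,\varphi)$. On $\{\varphi < 0\}$ the sandwich gives $-\dot\varphi(0,x) \ge -\varphi(x) > 0$, so taking $\chi = -\mathbf{1}_{\{\varphi < 0\}}$ produces $d(0,\varphi) \ge \frac{1}{\sqrt{V}} \left( -\int_{\varphi < 0} \varphi ~ \omega^n \right)$. Taking the maximum of the two estimates completes the proof. The asymmetry between the measures $\omega_\varphi^n$ and $\omega^n$ in the statement is thus a reflection of which endpoint of the geodesic is used.

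The step I expect to be the main obstacle is regularity. The minimizing geodesic joining $0$ to $\varphi$ is in general only $C^{1,1}$, so the geodesic equation and the positivity $\omega_{\varphi(s)} \ge 0$ hold only almost everywhere. This is still enough: at the $C^{1,1}$ level $\dot\varphi(\cdot,x)$ is Lipschitz with $\ddot\varphi \ge 0$ a.e., hence non-decreasing, which is all the convexity I invoked; a length-minimizing geodesic has constant speed $\left( \int_X \dot\varphi(s)^2 ~ \omega_{\varphi(s)}^n \right)^{1/2} = d(0,\varphi)$; and the non-smoothness of the test functions causes no trouble, since Cauchy--Schwarz only requires $\chi \in L^\infty$ with $|\chi| \le 1$. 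Alternatively, one may run the whole argument on the smooth $\epsilon$-geodesics, whose defining equation has strictly positive right-hand side (so they remain convex in $s$), and pass to the limit using the uniform $C^{1,1}$ estimate for the geodesic.
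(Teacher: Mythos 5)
The first thing to say is that the paper itself contains no proof of this lemma: it is quoted as a conjecture of Donaldson proved by Chen \cite{Chen1}, and the author immediately moves on to the toric refinement (Lemma \ref{two_integral_bound}), which is the statement actually used later. So the comparison must be made with Chen's proof, and your proposal is, in outline, a faithful reconstruction of it: convexity of $s \mapsto \varphi(s,x)$ along (approximate) geodesics, the endpoint sandwich $\dot\varphi(0,x) \le \varphi(x) \le \dot\varphi(1,x)$, and Cauchy--Schwarz against indicator functions at the two endpoints, which is exactly where the asymmetry between the measures $\omega_\varphi^n$ and $\omega^n$ comes from. The two endpoint estimates are correct as you state them.

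Two caveats about the ingredients you invoke. First, your ``primary'' route to convexity does not quite work: for the $C^{1,1}$ geodesic the almost-everywhere equation is $\bigl(\ddot\varphi - |\partial\dot\varphi|^2_{\varphi(s)}\bigr)\,\omega_{\varphi(s)}^n = 0$, which carries no information on the set where $\omega_{\varphi(s)}^n$ degenerates, so one cannot conclude $\ddot\varphi \ge 0$ there. The $\epsilon$-geodesic approximation, which you present as an alternative, is in fact the necessary main argument (and is Chen's): the $\epsilon$-geodesic equation has strictly positive right-hand side, so convexity in $s$ holds there and survives the $C^{1,\alpha}$ limit, as does the (approximately) constant-speed property. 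Second, and more significant as a matter of logic, you treat the existence of a length-minimizing, constant-speed geodesic from $0$ to $\varphi$ as routine. In this infinite-dimensional setting, the statement that the $C^{1,1}$ geodesic realizes the infimum of lengths over all smooth paths in $\mathcal{H}$ is itself one of the principal theorems of \cite{Chen1}, proved by a first-variation comparison with $\epsilon$-geodesics; it is established independently of the present lemma, so your argument is not circular, but it must be cited as a deep input rather than assumed as standard Riemannian geometry. Finally, it is worth noting the more elementary route available in the toric setting of this paper, which is what the author uses in Lemma \ref{two_integral_bound}: under the Legendre transform the Mabuchi metric on toric-invariant potentials becomes the flat $L^2(P, d\mu)$ metric on symplectic potentials, so the quantity $\frac{(2\pi)^n}{n!}\int_P (u_\varphi - u)^2\, d\mu$ is (up to normalization) the square of the toric geodesic distance, and the pointwise Legendre-duality inequality $\varphi(\xi) \le u(x) - u_\varphi(x)$ at $x = \nabla(\psi + \varphi)(\xi)$ yields even stronger, $L^2$-type bounds with no geodesic regularity theory at all.
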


In the toric case, we can have a stronger result:

\begin{lemma}
\label{two_integral_bound}
  For any toric invariant K\"ahler metric
  \begin{align*}
    \omega_\varphi = \omega + \sqrt{-1} \partial \bar{\partial} \varphi = \sqrt{-1} \partial \bar{\partial} (\psi + \varphi) ~ \mathrm{on} ~ \RR^n.
  \end{align*}
  Let the Legendre dual of $\psi, \psi + \varphi$ be $u, u_\varphi$ respectively. We have
  \begin{align*}
    \frac{(2\pi)^n}{n!} \int_P (u_\varphi - u)^2 ~ d \mu \ge \left( \max \left( \int_{\varphi > 0} \varphi^2 ~ \omega_{\varphi}^n, ~ \int_{\varphi < 0} \varphi^2 ~ \omega^n \right) \right).
  \end{align*}
\end{lemma}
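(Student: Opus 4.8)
The plan is to push both integrals on the complex side forward to the polytope $P$ through the two moment maps attached to $\omega$ and $\omega_\varphi$, and then to bound $\varphi$ pointwise by $u_\varphi - u$ using nothing more than the Fenchel (Young) inequality for Legendre duals. Recall that $u$ and $u_\varphi$ are the convex conjugates of $\psi$ and $\psi + \varphi$, so for every $\xi$ and every $x$ one has $u(x) \ge \langle \xi, x\rangle - \psi(\xi)$ and $u_\varphi(x) \ge \langle \xi, x\rangle - (\psi+\varphi)(\xi)$, with equality exactly at the point where $x$ equals the corresponding gradient.

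The key step is a two-sided pointwise estimate for $u_\varphi - u$. Fix $x \in P$ and let $\xi_\varphi = \xi_\varphi(x)$ be determined by $x = \nabla(\psi+\varphi)(\xi_\varphi)$, so that $u_\varphi(x) = \langle \xi_\varphi, x\rangle - (\psi+\varphi)(\xi_\varphi)$. Feeding $\xi_\varphi$ into the Fenchel inequality for $\psi$ gives $u(x) \ge \langle \xi_\varphi, x\rangle - \psi(\xi_\varphi) = u_\varphi(x) + \varphi(\xi_\varphi)$, hence $u_\varphi(x) - u(x) \le -\varphi(\xi_\varphi)$. Symmetrically, letting $\xi_0 = \xi_0(x)$ be determined by $x = \nabla\psi(\xi_0)$ and feeding $\xi_0$ into the Fenchel inequality for $\psi+\varphi$ yields $u_\varphi(x) - u(x) \ge -\varphi(\xi_0)$. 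Thus I obtain $-\varphi(\xi_0(x)) \le u_\varphi(x) - u(x) \le -\varphi(\xi_\varphi(x))$ for every $x \in P$.

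Next I change variables in each integral. Under the moment map $x = \nabla(\psi+\varphi)$ of $\omega_\varphi$, the measure $\omega_\varphi^n$ pushes forward to $\frac{(2\pi)^n}{n!}\,d\mu$, and the region $\{\varphi > 0\}$ becomes $\{x : \varphi(\xi_\varphi(x)) > 0\}$; on this region the upper bound gives $u_\varphi(x) - u(x) \le -\varphi(\xi_\varphi(x)) < 0$, so $(u_\varphi - u)^2 \ge \varphi(\xi_\varphi(x))^2$ and therefore $\int_{\varphi>0}\varphi^2\,\omega_\varphi^n \le \frac{(2\pi)^n}{n!}\int_P (u_\varphi - u)^2\,d\mu$. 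Likewise, under the moment map $x = \nabla\psi$ of $\omega$, the measure $\omega^n$ pushes forward to $\frac{(2\pi)^n}{n!}\,d\mu$, the region $\{\varphi < 0\}$ becomes $\{x : \varphi(\xi_0(x)) < 0\}$, and there the lower bound gives $(u_\varphi - u)^2 \ge \varphi(\xi_0(x))^2$, so $\int_{\varphi<0}\varphi^2\,\omega^n \le \frac{(2\pi)^n}{n!}\int_P(u_\varphi - u)^2\,d\mu$. Taking the maximum of the two estimates is exactly the asserted inequality.

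The routine points to verify are the pushforward constant $\frac{(2\pi)^n}{n!}$ (integrate out the torus factor of volume $(2\pi)^n$ and use that the Jacobian of the Legendre map cancels $\det D^2$ against $d\mu$) and the measurability of the regions. The genuinely delicate bookkeeping — and where I expect the only real risk of error — is matching each region to the correct one-sided Fenchel inequality and the correct moment map: the positive part of $\varphi$ must be read off through the $\omega_\varphi$-moment map and paired with the upper bound, while the negative part must be read off through the $\omega$-moment map and paired with the lower bound. Getting this pairing backwards would destroy the pointwise domination, so it is the step I would double-check most carefully.
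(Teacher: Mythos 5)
Your proof is correct and follows essentially the same route as the paper's: your two one-sided Fenchel estimates are exactly the paper's pointwise bounds $\varphi(\xi) \le u(x) - u_\varphi(x)$ for $x = \nabla(\psi+\varphi)(\xi)$ and $\varphi(\xi) \ge u(x) - u_\varphi(x)$ for $x = \nabla\psi(\xi)$, paired with the same moment maps and the same pushforward of $\omega_\varphi^n$ and $\omega^n$ to $\frac{(2\pi)^n}{n!}\,d\mu$. The region-to-moment-map pairing you flagged as delicate is indeed the one the paper uses, so there is nothing to correct.
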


\begin{proof}
  Let $x = \nabla(\psi + \varphi)(\xi), ~ \xi \in \RR^n$. Again we have

  \begin{align*}
    \varphi(\xi) \le u(x) - u_\varphi(x).
  \end{align*}

  It implies that

  \begin{align*}
    \frac{(2\pi)^n}{n!} \int_P (u_\varphi - u)^2 ~ d \mu \ge \int_{\varphi > 0} \varphi^2 ~ \omega_{\varphi}^n.
  \end{align*}

  On the other hand, let $x = \nabla \psi(\xi), ~ \xi \in \RR^n$. Similarly we have

  \begin{align*}
    \varphi(\xi) & = (\psi + \varphi)(\xi) - \psi(\xi)  \\
    & = \left( \max_{y \in P}\sum_{i=1}^n \xi y - u_\varphi(y) \right) - \left( \sum_{i=1}^n \xi x - u(x) \right) \\
    & \ge u(x) - u_\varphi(x).
  \end{align*}

  Then we have

  \begin{align*}
    \frac{(2\pi)^n}{n!} \int_P (u_\varphi - u)^2 ~ d \mu \ge \int_{\varphi < 0} \varphi^2 ~ \omega^n.
  \end{align*}
\end{proof}

As a consequence, we have 

\begin{cor}

  \label{cor_max}
  There exists a constant $C_4(P, T, u_0)$ depending on $P, T$ and $u(0, x)$ such that for any $t < T$ we have

  \begin{align*}
    \max_{z \in X} \varphi(t, z) > C_4(P, T, u_0).
  \end{align*}
\end{cor}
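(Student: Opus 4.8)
The plan is to obtain the lower bound on $\max_{z\in X}\varphi$ by feeding an $L^2$ control of the symplectic potential into the $\{\varphi<0\}$ half of Lemma \ref{two_integral_bound}. First I would note that, in the notation of the earlier lemmas, $u=u(0,\cdot)$ and $u_\varphi=u(t,\cdot)$, so the $L^2$ bound $\|u(t,x)\|_{L^2}<C(P,T,u_0)$ proved above, together with the fixed $L^2$ norm of the initial data, gives
\[
\int_P (u_\varphi-u)^2\,d\mu \le 2\int_P u(t,x)^2\,d\mu + 2\int_P u(0,x)^2\,d\mu \le C'(P,T,u_0),
\]
uniformly in $t\in[0,T)$. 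Substituting this into the second inequality of Lemma \ref{two_integral_bound} yields the uniform estimate
\[
\int_{\varphi<0}\varphi^2\,\omega^n \le \frac{(2\pi)^n}{n!}\,C'(P,T,u_0)=:C''(P,T,u_0).
\]

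Next I would argue by cases on the sign of $\max_{z\in X}\varphi$. If $\max_{z\in X}\varphi\ge 0$, then any negative constant serves as $C_4$ and there is nothing to prove. If instead $\max_{z\in X}\varphi<0$, then $\varphi<0$ everywhere on $X$, so $\int_{\varphi<0}\varphi^2\,\omega^n=\int_X\varphi^2\,\omega^n$, and the pointwise inequality $\varphi\le\max_{z\in X}\varphi<0$ forces $\varphi^2\ge\left(\max_{z\in X}\varphi\right)^2$ everywhere. Hence
\[
\left(\max_{z\in X}\varphi\right)^2\int_X\omega^n \le \int_X\varphi^2\,\omega^n \le C''(P,T,u_0).
\]
Since $\int_X\omega^n$ is a fixed positive cohomological constant, this bounds $\left|\max_{z\in X}\varphi\right|$ from above, giving $\max_{z\in X}\varphi> -\sqrt{C''(P,T,u_0)/\int_X\omega^n}$. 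Taking $C_4(P,T,u_0)$ to be this (negative) quantity closes both cases.

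The argument is essentially routine once Lemma \ref{two_integral_bound} is available; the only point requiring care is to verify that every constant depends solely on $P$, $T$, and $u_0$ and not on $t$, which is guaranteed precisely because the $L^2$ bound is uniform on $[0,T)$. The one conceptual subtlety — and the reason the case split is natural — is that Lemma \ref{two_integral_bound} controls $\varphi$ on $\{\varphi<0\}$ against the \emph{background} volume form $\omega^n$, whose total mass is fixed; this is exactly the data needed to prevent $\varphi$ from being uniformly very negative, i.e. to bound $\max_{z\in X}\varphi$ from below. By contrast, the $\{\varphi>0\}$ half of the estimate, weighted by $\omega_\varphi^n$, controls the minimum from above, complementing Proposition \ref{upper_bound_max}.
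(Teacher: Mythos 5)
Your proof is correct and follows exactly the route the paper intends: Corollary \ref{cor_max} is stated there as an immediate consequence of Lemma \ref{two_integral_bound} together with the uniform $L^2$ bound on $u(t,\cdot)$ from Section 3, which is precisely the chain of implications you spell out (bound $\int_P(u_\varphi-u)^2\,d\mu$, deduce a bound on $\int_{\varphi<0}\varphi^2\,\omega^n$, and rule out $\max_{z\in X}\varphi$ being very negative). The only trivial adjustment is that your second case yields the non-strict bound $\max_{z\in X}\varphi\ge-\bigl(C''(P,T,u_0)/\int_X\omega^n\bigr)^{1/2}$, so one should take $C_4$ slightly smaller (e.g.\ subtract $1$) to get the strict inequality as stated.
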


\section{$L^1$-norm}

It is easy to see that for any $t < T$, we have

\begin{lemma}
\label{lem_integral_bound}
  \begin{align*}
    \int_X |\varphi(t)| ~ \omega^n < C_5(P, T, u_0),
  \end{align*}
  for some constant $C_5(P, T, u_0)$ depending on $P, T$ and $u_0$.
\end{lemma}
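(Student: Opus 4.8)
The plan is to decompose the integral according to the sign of $\varphi(t)$ and bound the two resulting pieces by two completely different inputs that are already in hand. Write
\begin{align*}
  \int_X |\varphi(t)| ~ \omega^n = \int_{\varphi > 0} \varphi ~ \omega^n + \int_{\varphi < 0} (-\varphi) ~ \omega^n.
\end{align*}
Throughout I would use that the total volume $\int_X \omega^n = \frac{(2\pi)^n}{n!} Vol(P)$ is a fixed finite constant depending only on $P$.

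For the positive part I would invoke Proposition \ref{upper_bound_max}, which gives the uniform pointwise bound $\varphi(t, z) \le \max_{z \in X} \varphi(t, z) < C_3(P, T, u_0)$. Since $\omega^n$ is a finite measure, this immediately yields
\begin{align*}
  \int_{\varphi > 0} \varphi ~ \omega^n \le C_3(P, T, u_0) \int_X \omega^n,
\end{align*}
which is bounded independently of $t$.

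For the negative part, the key observation is that the background volume form $\omega^n$ is paired precisely with $\int_{\varphi < 0} \varphi^2$ in the second inequality of Lemma \ref{two_integral_bound}, so that is where I would route the estimate. Combining that lemma with the $L^2$-bound $\|u(t, \cdot)\|_{L^2} < C(P, T, u_0)$ from the first lemma of this section (together with the triangle inequality $\|u_\varphi - u\|_{L^2} \le \|u(t, \cdot)\|_{L^2} + \|u(0, \cdot)\|_{L^2}$, the second term being a fixed quantity) gives
\begin{align*}
  \int_{\varphi < 0} \varphi^2 ~ \omega^n \le \frac{(2\pi)^n}{n!} \int_P (u_\varphi - u)^2 ~ d\mu < C(P, T, u_0).
\end{align*}
A single application of the Cauchy--Schwarz inequality with respect to the finite measure $\omega^n$ then upgrades this $L^2$-control to the $L^1$-control we want:
\begin{align*}
  \int_{\varphi < 0} (-\varphi) ~ \omega^n \le \left( \int_{\varphi < 0} \varphi^2 ~ \omega^n \right)^{1/2} \left( \int_X \omega^n \right)^{1/2} < C(P, T, u_0).
\end{align*}
Adding the two bounds produces the desired $C_5(P, T, u_0)$.

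I do not expect a genuine obstacle here, consistent with the ``it is easy to see'' preceding the statement; the only point that requires care is the asymmetry between the two halves. The positive part must be handled through the pointwise upper bound on $\varphi$, while the negative part must be pushed to the symplectic side, because Lemma \ref{two_integral_bound} couples $\int_{\varphi < 0} \varphi^2$ (not $\int_{\varphi > 0} \varphi^2$) with the background form $\omega^n$. Attempting to treat the positive part via Lemma \ref{two_integral_bound} would instead yield $\int_{\varphi > 0} \varphi^2 ~ \omega_\varphi^n$, involving the evolved volume form $\omega_\varphi^n$, and would require an extra comparison between $\omega_\varphi^n$ and $\omega^n$ that is neither available nor needed.
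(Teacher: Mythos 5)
Your proof is correct, but it takes a genuinely different route from the paper. The paper uses the classical Green's function argument: letting $z_0$ be the maximum point of $\varphi$, Green's representation together with $n + \triangle_\omega \varphi > 0$ (and the lower bound of the Green function of the fixed metric $\omega$) gives $\varphi(z_0) - Vol(X)^{-1}\int_X \varphi\,\omega^n < C(\omega)$, and then a case analysis on the sign of $\varphi(z_0)$ controls the negative part of the integral; the positive part is controlled by $\max\varphi$ as in your argument. That route needs both Proposition \ref{upper_bound_max} and Corollary \ref{cor_max} (the lower bound of $\max\varphi$ enters when $\varphi(z_0) \le 0$), and it is the standard argument valid on \emph{any} compact K\"ahler manifold, which is in keeping with the paper's remark that the $L^1$ bound ``can be derived from the bound of $\max\varphi$.'' Your treatment of the negative part instead routes everything through the toric structure: the symplectic-side $L^2$ bound on $u(t)$ (from the distance-decreasing property of the flow), Lemma \ref{two_integral_bound}, and Cauchy--Schwarz. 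This is sound --- your identification of $u_\varphi = u(t,\cdot)$, $u = u(0,\cdot)$ matches the paper's conventions, and your observation about the asymmetry of Lemma \ref{two_integral_bound} (that it pairs $\{\varphi<0\}$ with $\omega^n$ and $\{\varphi>0\}$ with $\omega_\varphi^n$) is exactly right. What the two approaches buy: yours stays entirely inside the toric framework, avoids Green's functions, does not need Corollary \ref{cor_max}, and in fact yields the stronger $L^2(\omega^n)$ control of the negative part; the paper's is more general and is the version of the argument that transfers to the non-toric setting. (One cosmetic point: the constant relating $\int_X \omega^n$ to $Vol(P)$ is off, but since it is a fixed constant depending only on $P$ this does not affect anything.)
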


In fact, it is well known that the bound of

\begin{align*}
  \int_X |\varphi| ~ \omega^n
\end{align*}

can be derived from the bound of $\max_{z \in X} \varphi(z)$. The arguments go as follows: \\

Let $z_0 \in X$ be the point where $\varphi(z)$ reaches its maximum. By the Green's formula, we have

\begin{align*}
  \varphi(z_0) = \left( Vol(X) \right)^{-1} \int_X \varphi ~ \omega^n - \left( Vol(X) \right)^{-1} \int_X \triangle_\omega \varphi(\tilde{z}) G(z_0, \tilde{z}) ~ \omega^n.
\end{align*}

Since $\omega_\varphi = \omega + \sqrt{-1} \partial \bar{\partial} \varphi > 0$, taking trace with respect to $\omega$, we have

\begin{align*}
  n + \triangle_\omega \varphi > 0.
\end{align*}

Thus we obtain
$$
\varphi(z_0) - \left( Vol(X) \right)^{-1} \int_X \varphi ~ \omega^n < C(\omega),
$$
where $C(\omega)$ is a constant depending on $\omega$.

\begin{itemize}

  \item

If $\varphi(z_0) > 0$, then

\begin{align*}
  C(\omega) & > \varphi(z_0) - Vol(X)^{-1} \int_{\varphi > 0} \varphi ~ \omega^n + Vol(X)^{-1} \int_{\varphi < 0} -\varphi ~ \omega^n \\
  & > Vol(X)^{-1} \int_{\varphi < 0} -\varphi ~ \omega^n.
\end{align*}

Thus we have 

\begin{align}
  \label{integral_bound}
  \int_X |\varphi| ~ \omega^n < C(\omega, \varphi(z_0)),
\end{align}

where $C(\omega, \varphi(z_0))$ depending on $\omega$ and $\varphi(z_0)$.

\item If $\varphi(z_0) \le 0$, then it is straightforward to see inequality \eqref{integral_bound}.

\end{itemize}

Our next lemma shows that we can also bound the $L^1$-norm of $\varphi(t)$ with respect to $\omega(t)$.

\begin{lemma}
\label{lem_integral_bound_2}
  Suppose that $D_\omega(\varphi) = 0$, then
  \begin{align}
    \label{integral_bound_2}
    \int_X |\varphi| ~ \omega_\varphi^n < C \left(\max_{z \in X} \varphi(z), |\varphi|_{L^1(\omega)} \right),
  \end{align}
  where $C\left(\max_{z \in X} \varphi(z), |\varphi|_{L^1(\omega)} \right)$ is some constant depending on $\max_{z \in X} \varphi(z)$ and $|\varphi|_{L^1(\omega)}$.
\end{lemma}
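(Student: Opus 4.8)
The plan is to express everything through the functionals $I,J,D$ and to exploit the hypothesis $D_\omega(\varphi)=0$; no hard analysis is needed, only careful sign bookkeeping. First I would decompose the integral according to the sign of $\varphi$. Since
$$
\int_X |\varphi| ~ \omega_\varphi^n + \int_X \varphi ~ \omega_\varphi^n = 2 \int_{\varphi > 0} \varphi ~ \omega_\varphi^n,
$$
I obtain the identity $\int_X |\varphi| ~ \omega_\varphi^n = 2 \int_{\varphi > 0} \varphi ~ \omega_\varphi^n - \int_X \varphi ~ \omega_\varphi^n$. The positive-part term is controlled directly: on $\{\varphi > 0\}$ one has $0 < \varphi \le \max_{z} \varphi(z)$, and $\int_X \omega_\varphi^n = \int_X \omega^n = Vol(X)$ is a cohomological constant, so
$$
\int_{\varphi > 0} \varphi ~ \omega_\varphi^n \le \max\{ \max_{z} \varphi(z), 0 \} \cdot Vol(X).
$$

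The remaining task is to bound $-\int_X \varphi ~ \omega_\varphi^n$, and here I would use the definitions of $I$ and $J$ verbatim. From $I(\varphi) = \int_X \varphi(\omega^n - \omega_\varphi^n) = \int_X \varphi ~ \omega^n - \int_X \varphi ~ \omega_\varphi^n$ together with the hypothesis $D_\omega(\varphi) = \int_X \varphi ~ \omega^n - J(\varphi) = 0$, which gives $\int_X \varphi ~ \omega^n = J(\varphi)$, I get the clean identity
$$
- \int_X \varphi ~ \omega_\varphi^n = I(\varphi) - J(\varphi) \ge 0,
$$
the nonnegativity coming from $J \le \frac{n}{n+1} I$. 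To bound $I - J$ from above I would combine the comparison $\frac{1}{n+1} I \le J$, which yields $I \le (n+1)J$, with the elementary estimate $J(\varphi) = \int_X \varphi ~ \omega^n \le |\varphi|_{L^1(\omega)}$. This produces $I - J \le I \le (n+1) |\varphi|_{L^1(\omega)}$.

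Assembling the two estimates gives
$$
\int_X |\varphi| ~ \omega_\varphi^n \le 2 \max\{ \max_{z} \varphi(z), 0 \} \cdot Vol(X) + (n+1) |\varphi|_{L^1(\omega)},
$$
which is exactly a bound of the asserted form, depending only on $\max_{z} \varphi(z)$ and $|\varphi|_{L^1(\omega)}$ (with $Vol(X)$ and $n$ fixed topological data). The argument is essentially algebraic, so the only place it could fail is in the sign tracking: one must verify that $D_\omega(\varphi) = 0$ forces $\int_X \varphi ~ \omega^n = J \ge 0$, so that $J$ is genuinely dominated by the $L^1(\omega)$-norm, and that $I - J \ge 0$, so that the sign decomposition yields an upper bound rather than a lower one. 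These are the points I would check most carefully, but neither presents a real obstacle.
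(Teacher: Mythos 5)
Your proposal is correct and follows essentially the same route as the paper: use $D_\omega(\varphi)=0$ to identify $\int_X \varphi\,\omega^n$ with $J(\varphi)$, bound $J$ by $|\varphi|_{L^1(\omega)}$, pass to $I$ via $\frac{1}{n+1}I \le J \le \frac{n}{n+1}I$, and thereby control $\int_X \varphi\,\omega_\varphi^n$. The only difference is that you make explicit the final step the paper leaves implicit --- the decomposition $\int_X |\varphi|\,\omega_\varphi^n = 2\int_{\varphi>0}\varphi\,\omega_\varphi^n - \int_X \varphi\,\omega_\varphi^n$ together with the bound $\int_{\varphi>0}\varphi\,\omega_\varphi^n \le \max\{\max_z \varphi(z),0\}\,Vol(X)$, which is precisely where $\max_{z\in X}\varphi(z)$ enters the constant.
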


\begin{proof}
  Recall that

  \begin{align*}
    D(\varphi) = \int_X \varphi ~ \omega^n - J(\varphi).
  \end{align*}

  The bound of $L^1_\omega(\varphi)$ give us the bound of $J(\varphi)$. Since
\begin{align*}
  \frac{1}{n+1}I(\varphi) \le J(\varphi) \le \frac{n}{n+1} I(\varphi).
\end{align*}

We obtain the bound of $I(\varphi)$ which also gives us the bound of 

\begin{align*}
  \int_X \varphi ~ \omega_\varphi^n.
\end{align*}

Hence we obtain the inequality \eqref{integral_bound_2}.

\end{proof}

\section{$L^\infty$-norm}

First, we will try to control the $L^2$-norm of $\varphi$ with respect of $\omega_\varphi$:

\begin{prop}
\label{prop_L2}
  Let $\varphi$ be a smooth relative K\"ahler potential with $|\varphi(z)| \ge c_0 > 0, ~ \forall z \in X$. Then
  \begin{align*}
    \|\varphi\|_{L^2_{\omega_\varphi}} \le C(n, c_0, C_s, J_\omega(\varphi), L^1_{\omega_\varphi}(\varphi)),
  \end{align*}
  where $C(n, c_0, C_s, J_\omega(\varphi), L^1_{\omega_\varphi}(\varphi))$ is a constant depending on $n, ~ c_0$, the Sobolev constant of $\omega_\varphi, ~ J_\omega(\varphi)$ and the $L^1$-norm of $\varphi$ with respect to $\omega_\varphi$.
\end{prop}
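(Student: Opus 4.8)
The plan is to combine the Sobolev inequality on $(X,\omega_\varphi)$ with an interpolation between $L^1$ and a higher Lebesgue exponent, extracting the required gradient bound from $J_\omega(\varphi)$. Since $X$ has real dimension $2n$, the relevant Sobolev exponent is $p=\tfrac{2n}{n-1}$, and the Sobolev constant $C_s$ of $\omega_\varphi$ gives, for every smooth $f$,
\[
\Big(\int_X |f|^{p}\,\omega_\varphi^n\Big)^{2/p}\le C_s\Big(\int_X |\nabla f|^2_{\omega_\varphi}\,\omega_\varphi^n+\int_X f^2\,\omega_\varphi^n\Big).
\]
I would apply this to $f=|\varphi|$: the hypothesis $|\varphi|\ge c_0>0$ guarantees that $\varphi$ never vanishes, so $|\varphi|$ is smooth with $|\nabla|\varphi||_{\omega_\varphi}=|\nabla\varphi|_{\omega_\varphi}$, and the left-hand side is exactly $\|\varphi\|_{L^p_{\omega_\varphi}}^2$. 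Writing $Y=\|\varphi\|_{L^2_{\omega_\varphi}}^2$, the estimate reads $\|\varphi\|_{L^p_{\omega_\varphi}}^2\le C_s\big(\int_X|\nabla\varphi|^2_{\omega_\varphi}\,\omega_\varphi^n+Y\big)$.

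Next I would control the gradient integral by $J_\omega(\varphi)$. Each summand of
\[
J(\varphi)=\sqrt{-1}\sum_{i=0}^{n-1}\frac{i+1}{n+1}\int_X \partial\varphi\wedge\bar\partial\varphi\wedge\omega^i\wedge\omega_\varphi^{n-1-i}
\]
is nonnegative, and the $i=0$ term equals a dimensional constant times $\int_X |\nabla\varphi|^2_{\omega_\varphi}\,\omega_\varphi^n$. Hence there is a $c_n>0$ with $\int_X|\nabla\varphi|^2_{\omega_\varphi}\,\omega_\varphi^n\le c_n\,J_\omega(\varphi)=:B$, and the Sobolev estimate becomes $\|\varphi\|_{L^p_{\omega_\varphi}}^2\le C_s(B+Y)$.

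The final step closes the loop. Since $1<2<p$, the interpolation inequality for $L^q$-norms gives $\|\varphi\|_{L^2_{\omega_\varphi}}\le \|\varphi\|_{L^1_{\omega_\varphi}}^{\theta}\|\varphi\|_{L^p_{\omega_\varphi}}^{1-\theta}$ with $\theta=\tfrac{1}{n+1}$. Squaring and inserting the Sobolev bound yields
\[
Y\le A^{2/(n+1)}\big(C_s(B+Y)\big)^{n/(n+1)},\qquad A:=\|\varphi\|_{L^1_{\omega_\varphi}}.
\]
Because the exponent $\tfrac{n}{n+1}<1$, Young's inequality lets me absorb the linear term: $A^{2/(n+1)}\big(C_s(B+Y)\big)^{n/(n+1)}\le \tfrac12(B+Y)+C(n)A^2C_s^n$, so that $Y\le B+2C(n)A^2C_s^n$. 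This bounds $\|\varphi\|_{L^2_{\omega_\varphi}}^2$ in terms of $n$, $C_s$, $J_\omega(\varphi)$ and $L^1_{\omega_\varphi}(\varphi)$, as claimed.

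The main obstacle is the circular appearance of $Y=\|\varphi\|_{L^2_{\omega_\varphi}}^2$ on both sides: on a compact manifold the Sobolev inequality unavoidably carries the lower-order $L^2$ term, so one cannot control the $L^2$-norm by the gradient alone. The device that resolves this is the interpolation against the controlled $L^1$-norm, which converts the estimate into a strictly sub-linear self-improving inequality in $Y$; the hypothesis $|\varphi|\ge c_0$ plays the supporting role of keeping $|\varphi|$ smooth so the Sobolev inequality may be applied to it directly. A secondary point needing care is the identification of the pure-$\omega_\varphi$ term of $J$ with the gradient integral, with the correct dimensional constant, which is where the dependence on $n$ enters.
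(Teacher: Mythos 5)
Your proof is correct, but it takes a genuinely different route from the paper's. The paper never applies the Sobolev inequality to $|\varphi|$ itself: it applies it to $\sqrt{|\varphi|}$, whose squared $L^2_{\omega_\varphi}$-norm is exactly the controlled quantity $L^1_{\omega_\varphi}(\varphi)$, so the lower-order term on the right-hand side of the Sobolev inequality is harmless from the start and no circularity ever arises. This yields an $L^{\frac{n}{n-1}}_{\omega_\varphi}$ bound on $\varphi$; for $n>2$ the paper then runs a Moser-type iteration with the fractional powers $f=|\varphi|^{\frac{n}{2(n-1)}}$ and so on, climbing the ladder of exponents $\left(\frac{n}{n-1}\right)^k$ until it passes $2$. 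In that scheme the hypothesis $|\varphi|\ge c_0$ is used quantitatively: differentiating fractional powers of $|\varphi|$ produces negative powers of $|\varphi|$, which are controlled by $c_0$, and this is why $c_0$ enters the paper's constant. Your scheme instead confronts the circular $L^2$ term head-on, interpolating $L^2$ between $L^1$ and $L^{\frac{2n}{n-1}}$ and absorbing via Young's inequality; the absorption is legitimate because $Y=\|\varphi\|^2_{L^2_{\omega_\varphi}}$ is finite a priori ($\varphi$ smooth, $X$ compact), a point worth stating explicitly. What your route buys: it is a one-step argument with no iteration, and the constant it produces is independent of $c_0$ --- indeed in your proof the lower bound $|\varphi|\ge c_0$ serves only to make $|\varphi|$ smooth, and even that could be dispensed with, since $|\varphi|$ is Lipschitz and the Sobolev inequality applies to $W^{1,2}$ functions. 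Both proofs share the same two substantive inputs: the bound $\int_X |\nabla\varphi|^2_{\omega_\varphi}\,\omega_\varphi^n \le C(n)\, J_\omega(\varphi)$, coming from the nonnegativity of the summands of $J$ and the identification of its pure-$\omega_\varphi$ term with the gradient integral, and the assumed $L^1_{\omega_\varphi}$ bound.
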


\begin{proof}
  Notice that 
  \begin{align*}
    &\int_X |\nabla \sqrt{|\varphi|}|^2 ~ \omega_\varphi^n \\
    = & \int_X \frac{|\nabla \varphi|^2}{4|\varphi|} ~ \omega_\varphi^n \\
    \le & \frac{1}{4 c_0} \int_X |\nabla \varphi|^2 ~ \omega_\varphi^n \\
    \le & \frac{n+1}{4 c_0} J_\omega(\varphi).
  \end{align*}

  The Sobolev inequality shows that

  \begin{align*}
    \| \sqrt{|\varphi|} \|_{L^{\frac{2n}{n-1}}_{\omega_\varphi}} \le C_s\left( \| \nabla \sqrt{|\varphi|} \|_{L^2_{\omega_{\varphi}}} + \| \sqrt{|\varphi|} \|_{L^2_{\omega_{\varphi}}} \right)
  \end{align*}

  It is clear that

  \begin{align*}
    \| \varphi \|_{L^{\frac{n}{n-1}}_{\omega_\varphi}} \le C(n, c_0, C_s, J_\omega(\varphi), L^1_{\omega_\varphi}(\varphi)).
  \end{align*}

  If the complex dimension of $X$ is $2$, then we are done. If $n > 2$, then we let $f = |\varphi|^{\frac{n}{2(n-1)}}$. Notice that
  \begin{align*}
  |\nabla f| &= \frac{n}{2(n-1)} \frac{|\nabla \varphi|}{|\varphi|^{\frac{n-2}{2(n-1)}}} \le C(n, c_0) |\nabla \varphi|\\
  \| f \|_{L^2_{\omega_{\varphi}}} &= \left( \| \varphi \|_{L^{\frac{n}{n-1}}_{\omega_\varphi}} \right)^{\frac{n}{2(n-1)}} \le C(n, c_0, C_s, J_\omega(\varphi), L^1_{\omega_\varphi}(\varphi)).
  \end{align*}

  Applying the Sobolev inequality again, we have

  \begin{align*}
    \| \varphi \|_{L^{\frac{n^2}{(n-1)^2}}_{\omega_\varphi}} =  \left( \| f \|_{L^{\frac{2n}{n-1}}_{\omega_\varphi}} \right)^{\frac{2(n-1)}{n}} \le C(n, c_0, C_s, J_\omega(\varphi), L^1_{\omega_\varphi}(\varphi)).
  \end{align*}

  Repeating the steps, we get the conclusion.
\end{proof}

Once we have the $L^2$ estimate, we could get the $L^\infty$ estimate by using the De Giorgi-Nash-Moser iteration:

\begin{prop}
\label{prop_C0}
  For a smooth relative K\"ahler potential $\varphi$,

  \begin{align*}
    |\varphi|_{L^\infty} \le C\left(n, C_s, \max_{z \in X}\varphi(z), L^2_{\omega_\varphi}(\varphi)\right),
  \end{align*}
  where $C\left(n, C_s, \max_{z \in X}\varphi(z), L^2_{\omega_\varphi}(\varphi)\right)$ is a constant depending on $n$, the Sobolev constant of $\omega_\varphi$ , $\max_{z \in X}\varphi(z)$ and $L^2_{\omega_\varphi}(\varphi))$.
\end{prop}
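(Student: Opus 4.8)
The plan is to run a De Giorgi--Nash--Moser iteration with respect to $\omega_\varphi$, using the given bound on $\|\varphi\|_{L^2_{\omega_\varphi}}$ (supplied by Proposition \ref{prop_L2}) as the seed of the iteration and the hypothesised bound on $\max_{z\in X}\varphi$ to control $\varphi$ from above. The starting point is the differential inequality for $\varphi$ with respect to its own metric: since $\omega > 0$ and $\omega_\varphi > 0$, taking the trace of $\sqrt{-1}\partial\bar\partial\varphi = \omega_\varphi - \omega$ with respect to $\omega_\varphi$ gives
\begin{align*}
  \triangle_{\omega_\varphi}\varphi = n - \mathrm{tr}_{\omega_\varphi}\omega < n.
\end{align*}
Hence $v := -\varphi$ is a subsolution in the sense that $\triangle_{\omega_\varphi} v \ge -n$, and it is exactly $\sup_X v^+ = -\min_X\varphi$ that remains to be estimated, the bound $\sup_X\varphi \le \max_{z\in X}\varphi$ being given.

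Next I would set up the iteration. Writing $u = v^+$ and testing $\triangle_{\omega_\varphi} v \ge -n$ against $\eta = u^{2\beta - 1}$ for $\beta \ge 1$, integrating by parts with respect to $\omega_\varphi^n$ (legitimate since $\varphi$ is smooth), and using $|\nabla u^\beta|^2 = \beta^2 u^{2\beta-2}|\nabla u|^2$ on $\{v>0\}$, one obtains an inequality of the form
\begin{align*}
  \int_X |\nabla u^\beta|^2 ~ \omega_\varphi^n \le C(n)\,\beta \int_X u^{2\beta - 1} ~ \omega_\varphi^n.
\end{align*}
The constant right-hand side $-n$ is absorbed by the elementary bound $t^{2\beta-1}\le 1 + t^{2\beta}$, so that $\int_X u^{2\beta-1}\,\omega_\varphi^n \le \mathrm{Vol}(X) + \|u^\beta\|_{L^2_{\omega_\varphi}}^2$, where $\mathrm{Vol}(X)=\int_X\omega_\varphi^n$ is fixed by the Kähler class. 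Feeding this into the Sobolev inequality for $\omega_\varphi$ (with exponent $\tfrac{2n}{n-1}$, the same one used in Proposition \ref{prop_L2}) yields the iteration step
\begin{align*}
  \|u\|_{L^{2\beta\chi}_{\omega_\varphi}}^{2\beta} \le C(n, C_s)\,\beta \left( \mathrm{Vol}(X) + \|u\|_{L^{2\beta}_{\omega_\varphi}}^{2\beta}\right), \qquad \chi = \frac{n}{n-1} > 1.
\end{align*}

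Then I would iterate over $\beta_k = \chi^k$ starting from $\beta_0 = 1$, where the seed $\|u\|_{L^2_{\omega_\varphi}} \le \|\varphi\|_{L^2_{\omega_\varphi}}$ is controlled by the given $L^2$ data. The standard Moser convergence argument --- the infinite product $\prod_k \big(C(n,C_s)\beta_k\big)^{1/(2\beta_k)}$ converges because $\beta_k$ grows geometrically --- then gives $\sup_X(-\varphi)^+ = \|u\|_{L^\infty} \le C(n, C_s, \mathrm{Vol}(X), \|\varphi\|_{L^2_{\omega_\varphi}})$. Combining this lower bound on $\varphi$ with the hypothesised upper bound $\max_{z\in X}\varphi$ yields the claimed estimate on $|\varphi|_{L^\infty}$.

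The integration by parts and the convergence of the Moser product are routine; the step requiring the most care is keeping the inhomogeneous term $-n$ under control through the iteration and verifying that the final constant depends only on $n$, the Sobolev constant $C_s$, $\max_{z\in X}\varphi$ and $\|\varphi\|_{L^2_{\omega_\varphi}}$ (with the fixed volume absorbed), rather than on any further geometry of $\omega_\varphi$. In particular one must check that only the Sobolev constant, and not separate lower bounds on $\omega_\varphi$, enters the estimate --- which is precisely why working entirely with respect to $\omega_\varphi$ and its own Laplacian is essential.
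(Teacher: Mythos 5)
Your proposal is correct and follows essentially the same route as the paper: a De Giorgi--Nash--Moser iteration with respect to $\omega_\varphi$ applied to (a positive version of) $-\varphi$, driven by the inequality $\triangle_{\omega_\varphi}(-\varphi) \ge -n$ coming from $\omega > 0$, the Sobolev inequality of $\omega_\varphi$, and the $L^2_{\omega_\varphi}$ bound as the seed, with $\max_{z\in X}\varphi$ supplying the upper side. The only difference is cosmetic: the paper works with the shift $\varphi_1 = -\varphi + \max_{z\in X}\varphi(z) + 1 \ge 1$, so the inhomogeneous term is absorbed via $\varphi_1^{p} \le \varphi_1^{p+1}$, whereas you truncate $u=(-\varphi)^+$ and absorb it via $t^{2\beta-1}\le 1+t^{2\beta}$ at the cost of a (topologically fixed) volume term.
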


\begin{proof}
  Let $\varphi_1 = - \varphi + \max_{z \in X} \varphi(z) + 1$.
  Since $\omega= \omega_\varphi + \sqrt{-1} \partial \bar{\partial} \varphi_1 > 0$, we have
  \begin{align*}
    n + \triangle_\varphi \varphi_1 > 0.
  \end{align*}

  Thus for any $p \ge 1$,

  \begin{align*}
    \int_X n \varphi_1^p ~ \omega_\varphi^n \ge & \int_X - \varphi_1^p \triangle_\varphi \varphi_1 ~ \omega_\varphi^n \\
    = & \frac{p}{(\frac{p+1}{2})^2} \int_X |\nabla \varphi_1^{\frac{p+1}{2}}|^2_\varphi ~ \omega_\varphi^n \\
  \end{align*}

  Thus by the Sobolev inequality, we have

  \begin{align*}
    \left( \int_X \varphi_1^{(p+1) \frac{n}{n-1}} ~ \omega_\varphi^n \right)^{\frac{n-1}{n}} \le C(n, C_s) (p+1) \int_X \varphi_1^{p+1} ~ \omega_\varphi^n.
  \end{align*}

  Then

  \begin{align*}
    \| \varphi_1 \|_{L^{\frac{(p+1)n}{n-1}}_{\omega_\varphi}} \le \left( C (p+1)\right)^{\frac{1}{p+1}} \|\varphi_1 \|_{L^{p+1}_{\omega_\varphi}}.
  \end{align*}

  By iterations, we get the conclusion.
\end{proof}

\section{Proofs of the theorems}

\begin{proof}[Proof of Theorem (\ref{thm1})]

By Proposition (\ref{upper_bound_max}) and Corollary (\ref{cor_max}), we uniformly control $\max_{z \in X} \varphi(t, z)$ for any $t \in [0, T)$. Then Lemma (\ref{lem_integral_bound}) and Lemma (\ref{lem_integral_bound_2}) provide us the uniform $L^1$-norm bound of $\varphi(t)$ with respect to $\omega$ and $\omega(t)$ respectively. Hence Proposition (\ref{prop_L2}) gives us the $L^2$-norm of $\varphi(t)$ with respect to $\omega(t)$ uniformly and Proposition (\ref{prop_C0}) gives us the $L^\infty$-norm of $\varphi(t)$ uniformly.
\end{proof}

\subsection{Global convergence}
\begin{proof} [Proof of Theorem (\ref{thm2})]
  Let us fix $t_0 > 0$ and write $\varphi(t)$ as $\varphi_0$ and $u(t)$ as $u_0$. Let $u$ be the normalized symplectic potential of $u_0$ at some point $x_0 \in P$. Since $(X, P)$ is uniformly $K$-stable, we have $\int_P u ~ d \mu < C$ where $C$ is some constant independent of $t$ by Proposition 5.1.8 and Lemma 5.1.3 in \cite{D1}. \\

We shall consider the corresponding K\"ahler potential $\psi$ of $u$ under the Legendre transformation and the relative K\"ahler potential $\varphi = \psi - \psi_\omega$. As in Proposition (\ref{upper_bound_max}) and Corollary (\ref{cor_max}), we obtain the upper and lower bound of $\max_{z \in X} \varphi(z)$. By using the arguments of Lemma (\ref{two_integral_bound}), we obtain the bounds of

\begin{align*}
  \int_{\varphi > 0} \varphi ~ \omega_\varphi^n, \quad \int_{\varphi < 0} -\varphi ~ \omega^n.
\end{align*}

Lemma 2.1 in \cite{ZZ} provides the bound of $J(\varphi)$. Thus we obtain the bound of 
\begin{align*}
  \int_X |\varphi| ~ \omega_\varphi^n.
\end{align*}
As before, we obtain the $L^\infty$ estimate of $\varphi$. The rest of the proof is identical to the proof of Theorem 1.6 of \cite{H2}.
\end{proof}

\subsection{General case}

\begin{proof} [Proof of Theorem (\ref{thm3})]
  By the proof of Theorem 1.4 in \cite{Chen3}, we obtain uniform upper bounds of 

  \begin{align*}
    \max_{z \in X} \varphi(t, z)
  \end{align*}
  and
  \begin{align*}
    \int_X |\varphi| ~ \omega^n.
  \end{align*}

   Then Lemma (\ref{lem_integral_bound_2}) provide us the $L^1$-norm bound of $\varphi(t)$ with respect to $\omega(t)$ uniformly. Hence Proposition (\ref{prop_L2}) gives us the $L^2$-norm of $\varphi(t)$ with respect to $\omega(t)$ uniformly and Proposition (\ref{prop_C0}) gives us the $L^\infty$-norm of $\varphi(t)$ uniformly.
\end{proof}

\end{document}